\documentclass[preprint,12pt,a4paper,numbers]{elsarticle}
\usepackage[T1]{fontenc}
\usepackage{iftex}
\ifPDFTeX 
  \usepackage[utf8]{inputenc}
\fi
\usepackage{lmodern,microtype}
\usepackage{amsmath,amssymb,amsthm,mathtools}
\usepackage{booktabs,array,longtable}
\usepackage{graphicx,float,etoolbox}
\usepackage[hidelinks]{hyperref}
\usepackage{enumitem}
\setlist{topsep=4pt,itemsep=3pt}
\allowdisplaybreaks[2]
\numberwithin{equation}{section}
\BeforeBeginEnvironment{align}{%
  \begin{subequations}%
}
\AfterEndEnvironment{align}{\end{subequations}}
\newtheorem{theorem}{Theorem}[section]
\newtheorem{lemma}{Lemma}[section]
\newtheorem{proposition}{Proposition}[section]

\theoremstyle{definition}
\newtheorem{definition}{Definition}[section]
\theoremstyle{remark}

\newcommand{\sgn}{\operatorname{sgn}}

\newcommand{\ee}{\mathrm{e}}

\newcommand{\BB}{\mathcal{B}}
\begin{document}
\begin{frontmatter}



	\title{A Sharp Bound of Crossing Limit Cycle for Planar piecewise linear Systems with a Straight Switching Line}


	\author{Zijie Xu} 
	\author{Jiang Yu\corref{author2}} 

	\cortext[author2]{Corresponding author.}
 \ead{jiangyu@sjtu.edu.cn}
	\affiliation{organization={School of mathematical Sciences},
		addressline={Shanghai Jiao Tong University},
		city={Shanghai},
		postcode={200240},
		country={PR China}}
 \begin{abstract}
Determining the maximum number of crossing limit cycles in planar discontinuous piecewise linear systems separated by a straight line is a central problem in nonsmooth dynamical systems and a natural piecewise-linear counterpart of Hilbert's sixteenth problem, yet its sharp uniform upper bound has long remained an open problem. In this paper, we prove that any such system admits at most three crossing limit cycles. Combined with known examples realizing three crossing limit cycles, this result establishes that three is the sharp uniform upper bound.
\end{abstract}
\begin{keyword}
piecewise linear system \sep Crossing limit cycle \sep Poincaré half-return map \sep Hilbert's 16th problem 
\MSC 34C05 \sep 34C07 \sep 34A36 
\end{keyword}
\end{frontmatter}

\section{Introduction}\label{sec:intro}

The second part of Hilbert's sixteenth problem concerns the number of limit cycles of planar polynomial differential systems. More precisely, for each positive integer $n$, it asks whether there exists a uniform upper bound $H(n)$ on the number of limit cycles of such systems of degree $n$. The same question has received considerable attention for planar piecewise linear systems, motivated in part by applications in electrical circuits and mechanics. Although a planar linear system has no limit cycles, a piecewise linear system may have several ones. We consider planar piecewise linear systems with two zones separated by the straight line $\Sigma=\{x=0\}$:
\begin{equation}\label{eq:raw-system}
  \frac{\mathrm{d}\mathbf{z}}{\mathrm{d}t}=
  \begin{cases}
    \mathbf{A}_1\mathbf{z}+\mathbf{B}_1, & x<0,\\
    \mathbf{A}_2\mathbf{z}+\mathbf{B}_2, & x>0,
  \end{cases}
  \qquad \mathbf{z}=(x,y)^{\top}\in\mathbb{R}^2,
\end{equation}
where $\mathbf{A}_i=(a_{ijk})_{2\times2}\in\mathbb{R}^{2\times2},\mathbf{B}_i=(b_{i1},b_{i2})^{\top}\in\mathbb{R}^2,i=1,2$. Write $\Sigma=\{x=0\}$ and $\Omega_\pm=\{\pm x>0\}$. The normal vectors on $\Sigma$ can be expressed as $f_i(y)=a_{i12}y+b_{i1},i=1,2$, and the crossing set is
\[
  \Sigma_c:=\{(0,y)\in\Sigma:f_1(y)f_2(y)>0\}.
\]
At these points, the two vector fields cross $\Sigma$ in the same direction, and their solutions are concatenated according to Filippov's convention~\cite{Filippov1988}.

\begin{definition}\label{def:crossing-cycle}
A \emph{crossing periodic orbit} of system~\eqref{eq:raw-system} is a nonconstant periodic trajectory $\Gamma=\{(x(t),y(t)) : t\in\mathbb{R}\}$ such that $\Gamma$ intersects both $\Omega_-$ and $\Omega_+$, $\Gamma\cap\Sigma$ is finite, and $\Gamma\cap\Sigma\subset\Sigma_c$. A \emph{crossing limit cycle} is a crossing periodic orbit that is isolated in the set of all crossing periodic orbits of~\eqref{eq:raw-system}.
\end{definition}

Freire, Ponce, and Torres~\cite{FreirePonceTorres2012Canonical}
proved that $a_{112}a_{212}>0$ is necessary for the existence
of a crossing periodic orbit. Under this condition, they
introduced the piecewise affine transformation
\[
(X,Y)=
\begin{cases}
\bigl(x,\ a_{122}x-a_{112}y-b_{11}\bigr), & x\le0,\\[4pt]
\left(
\dfrac{a_{112}}{a_{212}}x,\
\dfrac{a_{112}a_{222}}{a_{212}}x-a_{112}y-b_{11}
\right), & x>0,
\end{cases}
\]
which transforms~\eqref{eq:raw-system} into the Li\'enard
canonical form
\begin{equation}\label{eq:lienard-canonical}
\begin{aligned}
X<0:\quad&
\begin{cases}
\dot X=T_{L}X-Y,\\
\dot Y=D_{L}X-a_L,
\end{cases}
&
X>0:\quad&
\begin{cases}
\dot X=T_{R}X-Y+\delta,\\
\dot Y=D_{R}X-a_R.
\end{cases}
\end{aligned}
\end{equation}
Here
\[
T_L=\operatorname{tr}\mathbf A_1,\qquad
T_R=\operatorname{tr}\mathbf A_2,\qquad
D_L=\det\mathbf A_1,\qquad
D_R=\det\mathbf A_2,
\]
and
\[
a_L=a_{112}b_{12}-a_{122}b_{11},\qquad
a_R=\frac{a_{112}}{a_{212}}
     (a_{212}b_{22}-a_{222}b_{21}),\qquad
\delta=\frac{a_{112}}{a_{212}}b_{21}-b_{11}.
\]
In particular, systems~\eqref{eq:raw-system} and~\eqref{eq:lienard-canonical} have the same number of crossing limit cycles.

For continuous systems, the Lum--Chua conjecture~\cite{LumChua1991} asserts that there is at most one limit cycle, which was proved by Freire, Ponce, Rodrigo, and Torres~\cite{FreireEtAl1998}, and subsequently by Carmona, Fern\'andez-S\'anchez, and Novaes~\cite{CarmonaEtAl2021LumChua}. 

For discontinuous systems, Han and Zhang~\cite{HanZhang2010} constructed two limit cycles and conjectured that two is the maximum. Huan and Yang~\cite{HuanYang2012} found a numerical three-cycle example, whose existence was rigorously proved by Llibre and Ponce~\cite{LlibrePonce2012}. Further three-cycle constructions and examples were studied in~\cite{FreirePonceTorres2014Matching,FreirePonceTorres2014Mechanism,CarmonaEtAl2025Examples}, while upper-bound questions under additional structural hypotheses were investigated in~\cite{LlibreZhang2018Center,Huan2021,CarmonaEtAl2023Sewing}.
For the unrestricted two-zone class, Carmona, Fern\'andez-S\'anchez, and Novaes~\cite{CarmonaEtAl2023Bound} established a uniform upper bound of eight crossing limit cycles, which is not sharp. 

Our main result is the following.

\begin{theorem}\label{thm:main}
Any planar discontinuous piecewise linear system~\eqref{eq:raw-system} with two zones separated by a straight line has at most three crossing limit cycles.
\end{theorem}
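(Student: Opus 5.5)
We will work in the Liénard canonical form~\eqref{eq:lienard-canonical}, which has the same number of crossing limit cycles as~\eqref{eq:raw-system}. Crossing periodic orbits correspond to fixed points of a composition of two Poincaré half-return maps on $\Sigma$. The right half-map $y_0\mapsto y_1$ follows the flow in $\{X>0\}$, and the left half-map $y_0\mapsto y_1$ follows the flow in $\{X<0\}$; after shifting $Y$ by $\delta$ both act on $\Sigma$.

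The standard tool is the integral characterization of these maps due to Carmona, Fernández-Sánchez and Novaes. For a linear field $\dot X = TX-Y$, $\dot Y = DX-a$, the half-map $y_1=y(y_0)$ is determined by an implicit relation
\[
\mathrm{PV}\!\int_{y_1}^{y_0}\frac{-y}{W(y)}\,dy = c\,\frac{T}{\,\cdot\,}\ \text{(a constant depending only on } a,T,D\text{)}, \qquad W(y)=Dy^2-aTy+a^2.
\]
This relation yields the Riccati-type ODE $y_1' = \dfrac{y_0\,W(y_1)}{y_1\,W(y_0)}$, and the map is analytic on its domain.

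The plan is to write the crossing limit cycles as the isolated zeros of the displacement function. We set
\[
\Delta(y_0)=y_L(y_0)-y_R(y_0-\delta)-\delta,
\]
or better, use the symmetric parametrization in which $y_L$ and $y_R$ are both expressed through a common variable. Zeros of $\Delta$ correspond to intersections of the graphs of the left and right half-maps, each drawn in the $(y_0,y_1)$ plane. Both graphs are integral curves of explicit first-order polynomial ODEs, namely
\[
W_L(y_0)\,y_1\,dy_1=W_L(y_1)\,y_0\,dy_0,
\]
and the analogous equation for the right map with shifted coordinates. So we need to bound the number of intersection points of an integral curve of one polynomial foliation with an integral curve of another.

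\textbf{Step 1.} Reduce parameters by rescaling time and space, and by the symmetry $(X,Y,t)\mapsto(-X,-Y,-t)$ that exchanges the left and right sides. This leaves a manageable family in which we track $(T_L,D_L,a_L)$, $(T_R,D_R,a_R)$ and $\delta$. We dispose of degenerate cases directly: $\delta=0$ (the sewing case), and linear centers or invariant lines, where the maps have closed forms. In these cases the known bounds from the literature, at most one or two cycles, apply.

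\textbf{Step 2.} For $\delta\neq 0$, use a Khovanskii--Rolle type argument. Take the function $F(y_0,y_1)$ whose level set is the right graph. Restrict it to the left graph, parametrized by $y_0$, to get $g(y_0)$. Between two consecutive zeros of $g$ there is a zero of $g'$. Along the left curve, $g'$ is proportional to the polynomial
\[
P(y_0,y_1)=W_L(y_0)\,y_1\,\partial_{y_1}F + W_L(y_1)\,y_0\,\partial_{y_0}F .
\]
If $F$ itself is the integral $\int_{\cdot}^{\cdot} y/W_R$, then $P$ reduces to a rational expression. Clearing denominators gives a polynomial curve $\mathcal C=\{Q(y_0,y_1)=0\}$ of low degree, which has the form
\[
y_0y_1\bigl[W_L(y_0)W_R(y_1-\delta)-W_L(y_1)W_R(y_0-\delta)\bigr]+\cdots .
\]
After factoring out the trivial diagonal factor $(y_1-y_0)$, the number of cycles is at most $1$ plus the number of contacts of the left graph with $\mathcal C$.

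\textbf{Step 3.} Bound the contacts of the left graph with $\mathcal C$ by exploiting three facts: the graph is monotone decreasing, lies in $y_0>0>y_1$ (up to orientation), and is convex or concave according to the sign of $T_L$. Iterating Khovanskii--Rolle once more is the main obstacle, because naive degree counting gives a bound like eight. We would try to show that $\mathcal C$ meets the admissible quadrant in at most two branches, each crossed at most once by a monotone curve. This would be done by a sign analysis of $Q$ on the boundary $y_0y_1=0$ and at infinity, using the Bezout-type structure of the resultant in $y_1$, and by the case split $T_LT_R>0$ versus $T_LT_R\le 0$. The resulting count is $1+2=3$. We expect this counting, in particular ruling out extra branches of $\mathcal C$ for mixed focus/node/saddle configurations, to require the most delicate casework.
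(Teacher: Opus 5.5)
\textbf{There is a genuine gap: Step~3 is unproved.} Your Steps~1--2 set up a Khovanskii--Rolle count that, as you note yourself, only gives a bound of the order of eight. Everything needed to reach three is placed in Step~3, and there you only describe what you ``would try to show.'' You give no mechanism for why the left graph should have at most two contacts with $\mathcal C$. The curve $\mathcal C$ has degree around six and depends on $\delta$ through $W_R(\cdot-\delta)$. A sign analysis of $Q$ on $y_0y_1=0$ and at infinity therefore controls neither the number of branches of $\mathcal C$ inside the open quadrant nor how often a decreasing concave arc crosses each branch. Monotonicity and concavity of one curve do not limit its intersections with an arbitrary algebraic branch. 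As written, the argument does not get below the degree-counting bound, so the theorem is not proved. (Minor: the tangent to the left graph is $(y_1W_L(y_0),\,y_0W_L(y_1))$, so the contact expression should be $y_1W_L(y_0)\,\partial_{y_0}F+y_0W_L(y_1)\,\partial_{y_1}F$; your partials are swapped.)

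\textbf{The missing idea is a coordinate change that removes $\delta$ from the tangency condition.} In half-length/midpoint coordinates $\ell=(u-P(u))/2$, $m=(u+P(u))/2$, the matching equation becomes $M_1(\ell)-M_2(\ell)=\delta$. Then $\mathcal B'=M_1'-M_2'$ does not involve $\delta$. Since $M_i'=(1-s_i)/(1+s_i)$, it vanishes exactly when the slopes $s=-P_i'$ agree at the same $\ell$. Parametrizing both maps by $s$, these critical points are zeros of $\log\ell_1(s)-\log\ell_2(s)$. Its derivative is $\frac{1+s}{2\kappa_1\kappa_2}(\kappa_2-\kappa_1)$ with $\kappa=-\ell P''$. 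So the bound of three follows once $Y=\kappa_2-\kappa_1$ has at most one zero.

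That one-zero bound is the real content of the paper's proof. It controls the sign of $Y'$ at zeros of $Y$ using:
\begin{itemize}
\item the explicit identity $\ell^2P'''=\mathcal J(h,s,\kappa)$, together with monotonicity of $\mathcal J$ in $h=-m/\ell$;
\item information on the relative position of $h_1,h_2$ at zeros of $Y$, which comes from concavity bounds when $a_1a_2<0$ and from the invariant $D/T^2=\mathcal V(h,s,\kappa)$ when $a_1a_2>0$, $\nu_1\ne\nu_2$;
\item a barrier lemma for $s<1$ when $a_i<0$.
\end{itemize}
Two cases are handled separately. When $a_1^2a_2^2T_1T_2\le0$, concavity gives at most two cycles. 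When $\nu_1=\nu_2$ and $a_1a_2>0$, $Y\equiv0$ and a scaling argument gives at most one. Without some substitute for this $\delta$-free reduction and the one-zero bound on $Y$, your Step~3 remains a conjecture.
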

Together with the three-cycle examples in~\cite{LlibrePonce2012,FreirePonceTorres2014Matching}, Theorem~\ref{thm:main} shows that three is the sharp uniform upper bound.

The remainder of this article is organized as follows. Section~\ref{sec:normalization} introduces the Poincar\'e half-return maps, establishes their basic analytical properties, and classifies the parameter regimes of the Li\'enard canonical form. Section~\ref{sec:matching} derives the matching equation whose isolated solutions correspond to crossing limit cycles, and introduces an auxiliary function $Y$ to bound the number of its solutions. Finally, Section~\ref{sec:regimes} completes the proof of Theorem~\ref{thm:main} across each parameter regime.

\section{Poincar\'e half-return maps and preliminary estimates}\label{sec:normalization}
\subsection{Two comparison half-systems}\label{subsec:model}
On $\Sigma=\{X=0\}$, the normal components of~\eqref{eq:lienard-canonical} are $-Y$ from the left and $-Y+\delta$ from the right, so the crossing set splits into the two rays
\[
  \Sigma^+:=\{(0,Y) : Y>\max(0,\delta)\},\qquad
  \Sigma^-:=\{(0,Y) : Y<\min(0,\delta)\}.
\]
Since $\mathrm{d}X/\mathrm{d}t<0$ on $\Sigma^+$ and $\mathrm{d}X/\mathrm{d}t>0$ on $\Sigma^-$, every crossing periodic orbit rotates counterclockwise (see Fig.~\ref{fig:half-maps}): it enters the left zone $X<0$ from a point $(0,u)\in\Sigma^+$, reaches $\Sigma^-$ at $(0,v)$, and returns through the right zone $X>0$ from $(0,v)$ to $(0,u)$.

Following~\cite[Section~6]{CarmonaEtAl2023Properties}, we set $(\xi_1(t),\eta_1(t)):=(X(t),Y(t))$ for the forward flow in $X<0$, and apply the reflection, shift, and time reversal
\begin{equation}\label{eq:right-to-left}
  \xi_2(t):=-X(-t),\qquad \eta_2(t):=Y(-t)-\delta
\end{equation}
to the backward flow in $X>0$. Then both the forward left excursion (from $(0,u)$ to $(0,v)$) and the backward right excursion (from $(0,u-\delta)$ to $(0,v-\delta)$) satisfy the same left half-plane system
\begin{equation}\label{eq:normal-pair}
  \xi_i' = T_i\xi_i-\eta_i,\qquad
  \eta_i' = D_i\xi_i-a_i,\qquad
  \xi_i<0,\quad i=1,2,
\end{equation}
with parameters
\[
  (T_1,D_1,a_1):=(T_L,D_L,a_L)\quad\text{and}\quad
  (T_2,D_2,a_2):=(-T_R,D_R,-a_R).
\]
In matrix form,
\[
  z_i'=\widehat{\mathbf A}_i z_i-\binom{0}{a_i},\qquad
  z_i:=\binom{\xi_i}{\eta_i},\qquad
  \widehat{\mathbf A}_i:=\begin{pmatrix}T_i&-1\\D_i&0\end{pmatrix},\qquad i=1,2.
\]

\begin{figure}[H]
  \centering
  \includegraphics[width=\linewidth]{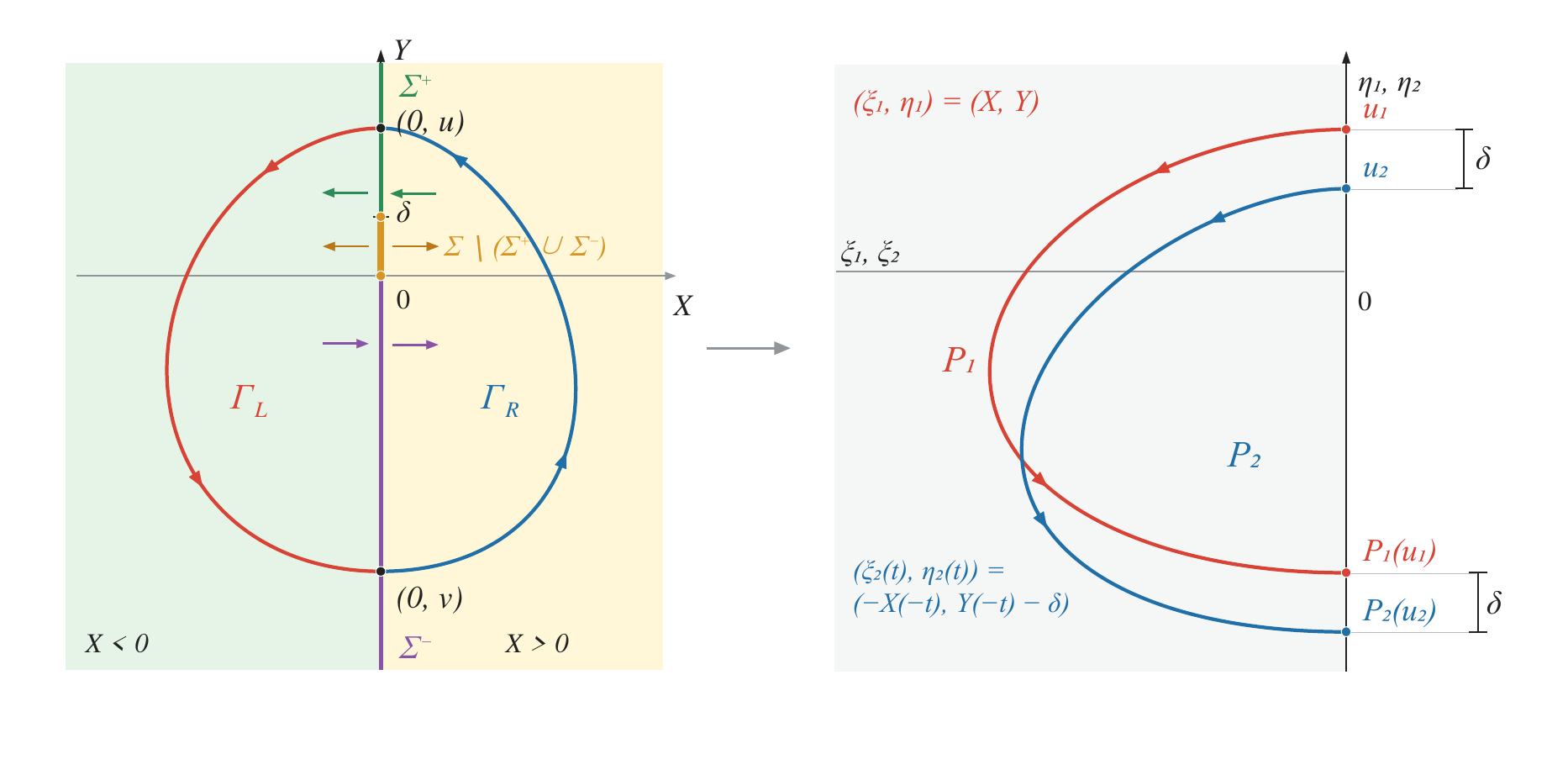}
  \caption{A crossing periodic orbit and its two comparison half-return arcs for $\delta>0$.}\label{fig:half-maps}
\end{figure}

\subsection{Return-time parametrization of the half-return map}\label{subsec:half-map-def}

Dropping the subscript $i$ from $z_i,\widehat{\mathbf A}_i$ and the parameters, we consider a single normalized subsystem on the open left half-plane:
\begin{equation}\label{eq:sub-normal}
  \left\{
  \begin{aligned}
    \xi'(t) &= T\xi(t)-\eta(t),\\
    \eta'(t) &= D\xi(t)-a,
  \end{aligned}
  \right.
  \qquad \xi(t)<0,
\end{equation}
For $u>0$, let $(\xi(t;u),\eta(t;u))$ be the solution of~\eqref{eq:sub-normal} starting at $(\xi(0),\eta(0))=(0,u)$. Whenever there exists a first return time $\tau=\tau(u)>0$ such that
\[
  \xi(t;u)<0 \quad \text{for all } t\in(0,\tau), \qquad \xi(\tau;u)=0, \qquad v:=\eta(\tau;u)<0,
\]
we define the \emph{Poincar\'e half-return map} by $P(u):=v$, and denote by $I\subset(0,\infty)$ its domain of definition.

Eliminating $\eta(t)=T\xi(t)-\xi'(t)$ from~\eqref{eq:sub-normal} shows that $\xi(t)$ satisfies the second-order initial value problem
\begin{equation}\label{eq:xi-ode}
  \xi''(t)-T\xi'(t)+D\xi(t)=a, \qquad \xi(0)=0,\quad \xi'(0)=-u.
\end{equation}
Let $S(t)$ be the solution of the homogeneous initial value problem
\begin{equation}\label{eq:kernel}
  S''(t)-TS'(t)+DS(t)=0, \qquad S(0)=0,\quad S'(0)=1.
\end{equation}
Setting $\Delta:=T^2-4D$, the explicit form of $S(t)$ is
\begin{equation}\label{eq:S-explicit}
  S(t)=
  \begin{cases}
    \mathrm{e}^{Tt/2}\,\dfrac{\sinh(\sqrt{\Delta}\,t/2)}{\sqrt{\Delta}/2}, & \Delta>0,\\[8pt]
    t\,\mathrm{e}^{Tt/2}, & \Delta=0,\\[6pt]
    \mathrm{e}^{Tt/2}\,\dfrac{\sin(\sqrt{-\Delta} t/2)}{\sqrt{-\Delta}/2}, & \Delta<0.
  \end{cases}
\end{equation}
In terms of $S(t)$, we define the auxiliary functions
\begin{equation}\label{eq:BN-def}
  B(t):=\int_0^t S(\theta)\,\mathrm{d}\theta, \qquad
  N(t):=S(t)^2-B(t)S'(t).
\end{equation}
Integrating~\eqref{eq:kernel} over $[0,t]$ and substituting~\eqref{eq:kernel} into the derivative of~\eqref{eq:BN-def} yield the identities
\begin{equation}\label{eq:BN-identities}
  S'(t)-TS(t)+DB(t)=1, \qquad
  N'(t)-TN(t)=S(t).
\end{equation}

By variation of parameters, the solution of~\eqref{eq:xi-ode} is given by
\begin{equation}\label{eq:sub-sol}
  \xi(t)=-uS(t)+aB(t), \qquad
  \xi'(t)=-uS'(t)+aS(t).
\end{equation}

the return relations $\xi(\tau)=0$ and $v=\eta(\tau)=-\xi'(\tau)$ take the form
\begin{equation}\label{eq:uv-linear}
  \left\{
  \begin{aligned}
    uS(\tau) &= aB(\tau),\\
    v-uS'(\tau) &= -aS(\tau).
  \end{aligned}
  \right.
\end{equation}
Whenever $S(\tau)\ne0$, solving~\eqref{eq:uv-linear} and substituting~\eqref{eq:BN-def} gives the return-time parametrization
\begin{equation}\label{eq:UV-param}
  u=U(\tau):=\frac{aB(\tau)}{S(\tau)}, \qquad
  v=V(\tau):=-\frac{aN(\tau)}{S(\tau)}.
\end{equation}

The domain and range of the half-return map follow from~\cite[Section~2, pp.~324--325, and Corollary~21]{CarmonaFernandez2021}. We state them for the strict return domain $u>0>P(u)$ used here.

\begin{proposition}[\cite{CarmonaFernandez2021}]\label{prop:half-map-domain}
The half-return map of~\eqref{eq:sub-normal} has a nonempty domain if and only if $a>0$, or $a\le0$ and $D>T^2/4$. Its domain $I$ and range $P(I)$ are as follows, with $\omega:=\sqrt{D-T^2/4}$ whenever $D>T^2/4$.
\begin{enumerate}
  \item If $a>0$ and $D\le T^2/4$, set
  \[
    \rho:=T/2+\sqrt{T^2/4-D},\qquad
    \rho_-:=-T/2+\sqrt{T^2/4-D}.
  \]
  Then $I=(0,U_*)$ and $P(I)=(V_*,0)$, where
  \[
    U_*=
    \begin{cases}
      a/\rho,&\rho>0,\\
      \infty,&\rho\le0,
    \end{cases}
    \qquad
    V_*=
    \begin{cases}
      -a/\rho_-,&\rho_->0,\\
      -\infty,&\rho_-\le0.
    \end{cases}
  \]
  \item If $a>0$ and $D>T^2/4$, then $I=(0,\infty)$ and $P(I)=(-\infty,0)$.
  \item If $a=0$ and $D>T^2/4$, then $I=(0,\infty)$, $P(I)=(-\infty,0)$, and
  \begin{equation}\label{eq:P-a0}
    P(u)=-\mathrm{e}^{\pi T/(2\omega)}u.
  \end{equation}
  \item If $a<0$ and $D>T^2/4$, then
  \begin{enumerate}
    \item for $T>0$, $I=(0,\infty)$ and $P(I)=(-\infty,v_b)$, where $v_b:=\lim_{u\downarrow0}P(u)<0$;
    \item for $T=0$, $I=(0,\infty)$, $P(I)=(-\infty,0)$, and $P(u)=-u$;
    \item for $T<0$, $I=(u_c,\infty)$ and $P(I)=(-\infty,0)$, where $u_c>0$ and $\lim_{u\downarrow u_c}P(u)=0$.
  \end{enumerate}
\end{enumerate}
\end{proposition}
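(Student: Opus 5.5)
The plan is to verify the statement directly from the return-time parametrization~\eqref{eq:UV-param} rather than rely only on the citation. A point $u>0$ lies in $I$ exactly when $\xi(t;u)=-uS(t)+aB(t)$ has a first positive zero $\tau$ with $\xi<0$ on $(0,\tau)$ and $v=-\xi'(\tau)<0$. Equivalently, $\tau$ must be the first positive root of $u=aB(t)/S(t)$. Since $\xi'(0)=-u<0$, the solution always enters $\{\xi<0\}$. The whole question is therefore which values of $U$ are attained at a first crossing time, and the sign of $V$ there.

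\emph{Case $a=0$.} Here $\xi=-uS(t)$. A return requires a positive zero of $S$, which exists iff $\Delta<0$; the first zero is $\tau=\pi/\omega$. Then $P(u)=uS'(\pi/\omega)=-u\,\mathrm{e}^{T\pi/(2\omega)}$, which gives~\eqref{eq:P-a0} and item~3.

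\emph{Case $a\neq0$, $\Delta<0$.} Both $S$ and $N$ are explicit trigonometric-exponential functions. On $(0,\pi/\omega)$ we have $S>0$, so $U(\tau)=aB/S$ is defined there, and the first return time lies in this interval. I would show that $U$ is monotone there via $U'=a(S^2-BS')/S^2=aN/S^2$. By~\eqref{eq:BN-identities}, $(\mathrm{e}^{-Tt}N)'=\mathrm{e}^{-Tt}S>0$ on $(0,\pi/\omega)$ and $N(0)=0$, so $N>0$ there. Hence $U$ is strictly monotone with sign $\sgn a$, and $V=-aN/S$ has sign $-\sgn a$.

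For $a>0$: $U$ runs from $0$ to $+\infty$ as $\tau\to\pi/\omega$, because $B(\pi/\omega)>0$ and $S\to0^+$. Also $V\to-\infty$, giving item~2.

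For $a<0$: $U$ decreases from $0$, so it takes positive values only beyond the first zero of $S$, and the analysis moves to $(\pi/\omega,2\pi/\omega)$, where $S<0$. There the first-return condition requires $B(\tau)<0$ before any earlier root. The sign of $B$ at $\pi/\omega$ is positive, and $B(2\pi/\omega)=\frac{1}{D}\bigl(1-S'(2\pi/\omega)\bigr)\cdot$(sign factor) $=\bigl(1-\mathrm{e}^{T\pi/\omega}\bigr)/D$. This is negative iff $T>0$, zero iff $T=0$, and positive iff $T<0$. That trichotomy produces the endpoints $v_b$ or $u_c$ in items 4(a)--(c), with the $T=0$ case reducing to the center $P(u)=-u$ by symmetry.

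\emph{Case $a\neq0$, $\Delta\ge0$.} $S>0$ on $(0,\infty)$, so $\tau\in(0,\infty)$ and $U'=aN/S^2$ with $N>0$ by the same integrating-factor argument. For $a<0$, $U<0$ and no return occurs; with the previous cases this gives the nonemptiness criterion. For $a>0$, $U$ increases from $0$ to $\lim_{t\to\infty}aB/S$. By the asymptotics $S\sim c\,\mathrm{e}^{\rho t}$ this limit is $a/\rho$ when $\rho>0$ and $\infty$ otherwise. Similarly $V=-aN/S$ tends to $-a/\rho_-$ or $-\infty$, using $N\sim S^2-BS'$ and the dominant exponent $\rho_-$ after cancellation; the degenerate case $\Delta=0$ uses $S=t\,\mathrm{e}^{Tt/2}$ directly.

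\emph{Main obstacle.} The main difficulty is the $a<0$, $\Delta<0$ case. There one must check that the first positive root of $uS=aB$ lies in $(\pi/\omega,2\pi/\omega)$ and that $U$ is monotone on the relevant subinterval. One must also identify the limits $v_b<0$ and $u_c>0$. I would handle this with the same identity $U'=aN/S^2$ together with the sign of $N$ on that interval, deferring to~\cite{CarmonaFernandez2021} for the fine limit statements.
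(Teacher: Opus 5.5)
Your route differs from the paper's, which does not prove this proposition at all but quotes it from the cited reference. You instead verify it from the flight-time parametrization \eqref{eq:UV-param}, using $U'=aN/S^2$ and the integrating factor $(\mathrm{e}^{-Tt}N)'=\mathrm{e}^{-Tt}S$ from \eqref{eq:BN-identities}. This is elementary and self-contained. It correctly gives the nonemptiness criterion and items~1--3. To turn the endpoint limits of $U$ and $V$ into the stated $I$ and $P(I)$ you also need $P$ strictly decreasing; the Liouville argument of Lemma~\ref{lem:first-derivative-energy} supplies this without circularity. Item~4, however, contains a sign error and leaves its substantive part unproved.

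On $(\pi/\omega,2\pi/\omega)$ you have $S<0$, so $u=aB(\tau)/S(\tau)>0$ with $a<0$ forces $B(\tau)>0$, not $B(\tau)<0$. Since $B'=S<0$ there, $B$ decreases from $B(\pi/\omega)>0$ to $B(2\pi/\omega)=(1-\mathrm{e}^{T\pi/\omega})/D$. This value comes straight from $S'-TS+DB=1$; no sign factor enters. For $T>0$ your own tool then finishes 4(a):
\begin{itemize}
\item $\mathrm{e}^{-Tt}N$ decreases on this interval to $-\mathrm{e}^{-T\pi/\omega}B(2\pi/\omega)>0$, so $N>0$ there.
\item $U$ is a decreasing bijection of $(\pi/\omega,\tau_0)$ onto $(0,\infty)$, where $\tau_0$ is the zero of $B$.
\item $\xi<0$ before the return: on $(0,\pi/\omega]$ because $-uS\le0$ and $aB<0$, and afterwards because $\xi=S\,(U-u)$ with $U>u$.
\item $v=-aN/S<0$, and $v_b=V(\tau_0)=-aS(\tau_0)<0$ because $N(\tau_0)=S(\tau_0)^2$.
\end{itemize}

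For 4(c), what you defer is not a fine limit statement but the domain claim $I=(u_c,\infty)$ itself. The local part is within reach of your tools. For $T<0$, $\mathrm{e}^{-Tt}N$ decreases from a positive value to $-\mathrm{e}^{-T\pi/\omega}B(2\pi/\omega)<0$, so $N$ has exactly one zero $t_c\in(\pi/\omega,2\pi/\omega)$. Hence $u_c=U(t_c)=\min U>0$, $V(t_c)=0$, and every $u>u_c$ returns at some $\tau\in(\pi/\omega,t_c)$ with $v<0$.

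What monotonicity of $U$ on one half-period cannot give is that $u\le u_c$ never returns in any \emph{later} half-turn. One way to close this:
\begin{itemize}
\item For $T<0$, $B>0$ on $(0,\infty)$, because its local minima are $B(2k\pi/\omega)=(1-\mathrm{e}^{kT\pi/\omega})/D>0$.
\item The orbit with $u=u_c$ sits at the fold $(0,0)$ at time $t_c$, so $u_c\notin I$. Afterwards $\xi(t;u_c)=aB(t-t_c)<0$, hence $\xi(\cdot;u_c)\le0$ for all $t\ge0$.
\item For $0<u<u_c$, $\xi(t;u)=\xi(t;u_c)+(u_c-u)S(t)<0$ wherever $S(t)<0$, and $\xi(t;u)\le aB(t)<0$ wherever $S(t)\ge0$. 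So that orbit never reaches $\Sigma$ again.
\end{itemize}
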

By continuity, $P$ extends to the left endpoint of $I$ via $P(0):=0$ when $a\ge0$ or $a<0=T$, $P(0):=v_b$ when $a<0<T$, and $P(u_c):=0$ when $a<0$ and $T<0$; these are limiting endpoint values and do not enlarge the domain $I$, whereas interior points $u\in I$ correspond to transversal crossings with $u>0>P(u)$.
\subsection{Derivatives and concavity of the half-return map}\label{subsec:derivatives}

The rational formulas for $P'$ and $P''$, together with the sign relations for $u+P(u)$ and $P''(u)$, were established by Carmona, Fern\'andez-S\'anchez, Garc\'{\i}a-Medina, and Novaes~\cite[Propositions~4.1, 5.1, and~5.2]{CarmonaEtAl2023Properties}. We recall these results and give a derivation based on Liouville's formula and an energy identity.

\begin{lemma}\label{lem:first-derivative-energy}
The half-return map $P$ is real analytic on $I$. For $u\in I$, let $v=P(u)<0$ and $\tau=\tau(u)$. Then
\begin{equation}\label{eq:Pprime-liouville}
  P'(u)=\frac{u}{v}\mathrm{e}^{T\tau}<0,
\end{equation}
and
\begin{equation}\label{eq:energy}
  v^2-u^2=2T\int_0^\tau \xi'(t)^2\,\mathrm{d}t.
\end{equation}
In particular,
\begin{equation}\label{eq:sgnu+v}
  \operatorname{sgn}(u+P(u))=-\operatorname{sgn}T,\qquad u\in I.
\end{equation}
\end{lemma}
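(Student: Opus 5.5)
We prove the three claims in turn: analyticity by the implicit function theorem, the derivative formula by Liouville's formula, and the energy identity by direct integration.

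\emph{Analyticity.} The solution $\xi(t;u)=-uS(t)+aB(t)$ from~\eqref{eq:sub-sol} is analytic in $(t,u)$. The return time $\tau(u)$ is defined by $F(\tau,u):=\xi(\tau;u)=0$. Transversality at the return point is
\[
\partial_\tau F(\tau,u)=\xi'(\tau)=-v>0 .
\]
The analytic implicit function theorem therefore makes $\tau$ analytic near each $u\in I$. Since $v=-\xi'(\tau(u);u)$, $P$ is analytic on $I$.

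\emph{Derivative formula.} Differentiate the flow $\varphi_t$ of~\eqref{eq:sub-normal} with respect to the initial point $(0,u)$. The variation $w(t)=\partial_u(\xi,\eta)(t;u)$ satisfies $w'=\widehat{\mathbf A}w$ with $w(0)=(0,1)^\top$. Liouville's formula applied to the pair $w(t)$ and $f(\varphi_t)$, where $f$ is the vector field, gives
\[
\det\bigl(w(t),f(\varphi_t)\bigr)=\mathrm{e}^{Tt}\det\bigl(w(0),f(0,u)\bigr),
\]
because $\operatorname{tr}\widehat{\mathbf A}=T$. Here $f(0,u)=(-u,-a)^\top$, so the right-hand side equals $u\,\mathrm{e}^{Tt}$.

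Differentiating $\varphi_{\tau(u)}(0,u)=(0,P(u))$ in $u$ yields
\[
w(\tau)+\tau'(u)\,f(0,v)=(0,P'(u))^\top .
\]
Since $\det(f,f)=0$, the determinant at $t=\tau$ is unchanged when $w(\tau)$ is replaced by $(0,P'(u))^\top$. With $f(0,v)=(-v,-a)^\top$ this determinant equals $vP'(u)$. Hence
\[
vP'(u)=u\,\mathrm{e}^{T\tau},
\]
which is~\eqref{eq:Pprime-liouville}. The sign $P'(u)<0$ follows from $u>0>v$.

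\emph{Energy identity.} Use the second-order form~\eqref{eq:xi-ode}: multiply $\xi''-T\xi'+D\xi=a$ by $\xi'$ and integrate over $[0,\tau]$. The boundary values are $\xi(0)=\xi(\tau)=0$, $\xi'(0)=-u$ and $\xi'(\tau)=-v$. The terms $D\xi\xi'$ and $a\xi'$ are exact derivatives, of $D\xi^2/2$ and $a\xi$, so they integrate to zero. What remains is
\[
\tfrac12\bigl(v^2-u^2\bigr)-T\int_0^\tau\xi'(t)^2\,\mathrm{d}t=0,
\]
which is~\eqref{eq:energy}.

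\emph{Sign relation.} The integral in~\eqref{eq:energy} is strictly positive, since $\xi'$ is not identically zero (indeed $\xi'(0)=-u\neq0$). Hence $\operatorname{sgn}(v^2-u^2)=\operatorname{sgn}T$. Writing $v^2-u^2=(v-u)(v+u)$ with $v-u<0$ gives $\operatorname{sgn}(u+v)=-\operatorname{sgn}T$, which is~\eqref{eq:sgnu+v}.

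\emph{Main obstacle.} The only delicate step is justifying the differentiation in the derivative formula. This requires that $\tau(u)$ be a genuine transversal first return, i.e.\ $v<0$ strictly, which holds on the interior of $I$ and is exactly the condition already used for analyticity.
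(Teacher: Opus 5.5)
Your proof is correct and follows the paper's argument essentially step for step. You get analyticity from the analytic implicit function theorem with transversality $\xi'(\tau)=-v>0$. You get the derivative formula from Liouville's formula applied to $\partial_u z$ and the vector field along the orbit; your column order $\det(w,f)$, rather than the paper's $\det(z',q)$, only flips the sign on both sides. You get the energy identity and the sign relation by multiplying the second-order equation by $\xi'$, integrating, and using $v-u<0$, exactly as the paper does.
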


\begin{proof}
Since $\partial_t\xi(\tau(u);u)=-v>0$, the analytic implicit-function theorem gives the analyticity of $\tau$ and $P$. Set $q=\partial_u z$. Both $z'$ and $q$ satisfy the variational equation $Y'=\widehat{\mathbf A}Y$. Differentiating $z(\tau(u);u)=(0,P(u))^{\top}$ gives $q(\tau)+z'(\tau)\tau'=(0,P')^{\top}$. Liouville's formula therefore yields
\[
  -vP'=\det(z'(\tau),q(\tau))
  =\mathrm{e}^{T\tau}\det(z'(0),q(0))=-u\mathrm{e}^{T\tau},
\]
which proves~\eqref{eq:Pprime-liouville}. Multiplying~\eqref{eq:xi-ode} by $2\xi'$ and integrating over $[0,\tau]$ gives~\eqref{eq:energy}. Its integral is positive because $\xi'(0)=-u\ne0$; hence $v^2-u^2=(v-u)(v+u)$ and $v-u<0$ imply~\eqref{eq:sgnu+v}.
\end{proof}

Following~\cite[equation~(2.3)]{CarmonaEtAl2023Properties}, write
\begin{equation}\label{eq:W-def}
  W(y):=Dy^2-aTy+a^2.
\end{equation}
To recover the rational derivative formulas, observe that at a boundary point $(0,y)$,
\[
  \det(z',z'')=\det\begin{pmatrix}-y&a-Ty\\-a&-Dy\end{pmatrix}=W(y).
\]
Both $z'$ and $z''$ satisfy $Y'=\widehat{\mathbf A}Y$, so Liouville's formula gives $W(v)=\mathrm{e}^{T\tau}W(u)$.

\begin{lemma}\label{lem:derivatives}
For $a\ne0$ and $u\in I$, let $v=P(u)$ and $\tau=\tau(u)$. Then
\begin{equation}\label{eq:W-pos}
  W(u)=\frac{a(u-v)}{S(\tau)}>0,\qquad W(v)=\mathrm{e}^{T\tau}W(u)>0,
\end{equation}
and
\begin{equation}\label{eq:P-derivatives}
  P'(u)=\frac{uW(v)}{vW(u)},\qquad
  P''(u)=\frac{a^2(v^2-u^2)W(v)}{v^3W(u)^2}.
\end{equation}
For all parameter values with nonempty return domain,
\begin{equation}\label{eq:concavity}
  \operatorname{sgn}P''(u)=-\operatorname{sgn}(a^2T),\qquad u\in I.
\end{equation}
\end{lemma}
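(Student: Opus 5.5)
The plan is to work directly from the return-time parametrization \eqref{eq:UV-param} and the two Liouville-type identities already in hand: $P'(u)=\frac{u}{v}\mathrm{e}^{T\tau}$ from Lemma~\ref{lem:first-derivative-energy}, and $W(v)=\mathrm{e}^{T\tau}W(u)$.

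\emph{Step 1: formula for $W(u)$.} From \eqref{eq:uv-linear} we have $uS(\tau)=aB(\tau)$ and $v=uS'(\tau)-aS(\tau)$. Using $S'-TS+DB=1$ from \eqref{eq:BN-identities}, eliminate $B=uS/a$ and $S'$. This gives
\[
  u-v=u-uS'+aS=u\bigl(1-S'\bigr)+aS=u\bigl(DB-TS\bigr)+aS .
\]
Substituting $B=uS/a$ yields
\[
  u-v=\frac{S}{a}\bigl(Du^2-aTu+a^2\bigr)=\frac{S}{a}W(u),
\]
which is the first identity in \eqref{eq:W-pos}. Positivity then follows in two stages. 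First, $S(\tau)\neq0$, since otherwise $aB(\tau)=0$, and one checks this is incompatible with a genuine return. Second, $\operatorname{sgn}S(\tau)=\operatorname{sgn}a$ along the return, because $U(\tau)=aB/S>0$ and $B$ has the sign of $S$ on the relevant interval: $S>0$ on $(0,\tau]$ when $\Delta\ge0$, and $\tau$ lies before the first zero of $S$ when $\Delta<0$. Together with $u-v>0$ this gives $W(u)>0$, and the Liouville identity gives $W(v)>0$.

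\emph{Step 2: $P'$.} Replace $\mathrm{e}^{T\tau}$ by $W(v)/W(u)$ in \eqref{eq:Pprime-liouville}. This gives $P'(u)=\frac{uW(v)}{vW(u)}$ directly.

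\emph{Step 3: $P''$.} Differentiate $\log|P'|=\log u-\log|v|+\log W(v)-\log W(u)$ with $v'=P'$. This gives
\[
  \frac{P''}{P'}=\frac1u-\frac{P'}{v}+\frac{W'(v)P'}{W(v)}-\frac{W'(u)}{W(u)} .
\]
Substitute $P'=uW(v)/(vW(u))$ and put everything over the common denominator $uvW(u)W(v)$. The $W'(v)$ term contributes $uW'(v)W(v)/(vW(u))$. Writing $W'(y)=2Dy-aT$, collect the terms into $\frac{1}{uv^2W(u)}$ times a polynomial expression in $u$ and $v$. The identity to verify is that, after multiplying back by $P'$, this expression reduces to $a^2(v^2-u^2)$. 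Concretely, one checks that
\[
  vW(u)-uW(v)+uv\bigl(W'(v)-W'(u)\bigr)\ \text{(suitably arranged)}=a^2(v-u)\cdot(\ldots).
\]
This works because $W(y)-yW'(y)=a^2-Dy^2$, and the $D$ terms cancel in the antisymmetric combination. This algebraic simplification is the main computational obstacle. I would organize it through the homogeneous quantity $\frac{W(y)}{y^2}$, whose derivative is $a(Ty-2a)/y^3$, so that the $D$ dependence drops out immediately.

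\emph{Step 4: concavity sign \eqref{eq:concavity}.} For $a\ne0$, the sign of $P''$ in \eqref{eq:P-derivatives} is $\operatorname{sgn}(v^2-u^2)\cdot\operatorname{sgn}(v^3)$, since $W>0$. Here $v<0$, so $\operatorname{sgn}(v^3)=-1$, and \eqref{eq:sgnu+v} gives $\operatorname{sgn}(v^2-u^2)=\operatorname{sgn}T$. Hence $\operatorname{sgn}P''=-\operatorname{sgn}T=-\operatorname{sgn}(a^2T)$. For $a=0$, \eqref{eq:P-a0} and part 4(b) of Proposition~\ref{prop:half-map-domain} show that $P$ is linear, so $P''=0=-\operatorname{sgn}(a^2T)$, which completes the proof.
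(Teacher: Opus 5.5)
\paragraph{Gap in Step~1.} Your Step~1 positivity argument fails when $a<0$. In that case the proposition forces $\Delta<0$. Since $S,B>0$ on $(0,\pi/\omega)$ and $S(\pi/\omega)=0<B(\pi/\omega)$, the solution $\xi(t)=-uS(t)+aB(t)$ is strictly negative on all of $(0,\pi/\omega]$. Hence $\tau>\pi/\omega$: the return happens \emph{after} the first zero of $S$, not before it as you claim.

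In fact $S(\tau)<0<B(\tau)$, so $B$ does not have the sign of $S$ at $\tau$. Taken literally, your premises would give $U(\tau)=aB(\tau)/S(\tau)<0$, contradicting $u>0$. The conclusion $\operatorname{sgn}S(\tau)=\operatorname{sgn}a$ happens to be true, but your reasoning does not establish it. So $W(u)>0$, which you need for the $P''$ sign in Step~4, is unproved for $a<0$.

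\paragraph{Repair.} Use the paper's argument. For $a<0$, a nonempty domain forces $D>T^2/4$. Then $W(y)=Dy^2-aTy+a^2$ has positive leading coefficient and discriminant $a^2(T^2-4D)<0$, so $W>0$ on all of $\mathbb{R}$. Your identity $S(\tau)W(u)=a(u-v)$ then gives $S(\tau)<0$ with no sign analysis of $S$ at all.

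For $a>0$, your claim that $\tau$ precedes the first positive zero $p$ of $S$ is correct, but it needs its one-line reason. If $p\le\tau$, then $\xi(p)=aB(p)>0$, contradicting the first-return definition.

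Your ``one checks'' for $S(\tau)\neq0$ is unnecessary. It follows at once from $S(\tau)W(u)=a(u-v)\neq0$.

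\paragraph{Remaining steps.} The rest of your plan matches the paper: substituting Liouville's identity for $P'$, logarithmic differentiation for $P''$, and the energy identity for the sign. In Step~3 the cleanest organization uses $W(y)/y$ rather than $W(y)/y^2$. Its derivative $D-a^2/y^2$ gives $P''=\frac{uP'}{W(u)}\bigl[(W/y)'(v)-(W/y)'(u)\bigr]$ immediately, and the $D$-terms cancel. In Step~4 the relevant item for $a=0$ is part~3 of the proposition; part~4(b) concerns $a<0$ and is not needed.
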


\begin{proof}
If $a>0$, then $S(\tau)>0$: otherwise, the first positive zero $p\le\tau$ of $S$ would satisfy $\xi(p)=aB(p)>0$, contrary to the definition of the first return. Equations~\eqref{eq:uv-linear} and~\eqref{eq:BN-identities} therefore give
\[
  W(u)=\frac{au(1-S'(\tau))+a^2S(\tau)}{S(\tau)}
      =\frac{a(u-v)}{S(\tau)}>0.
\]
If $a<0$, Proposition~\ref{prop:half-map-domain} gives $D>T^2/4$, so $W>0$ on $\mathbb{R}$; the same equations give $W(u)S(\tau)=a(u-v)$. Together with $W(v)=\mathrm{e}^{T\tau}W(u)$, this proves~\eqref{eq:W-pos} and the rational formula for $P'$.

Differentiation of $\log(-P')$ yields
\[
  P''
  =\frac{uP'}{W(u)}\left[
    \left.\frac{\mathrm{d}}{\mathrm{d}y}\frac{W(y)}y\right|_{y=v}
    -\left.\frac{\mathrm{d}}{\mathrm{d}y}\frac{W(y)}y\right|_{y=u}
  \right]
  =\frac{a^2(v^2-u^2)W(v)}{v^3W(u)^2}.
\]
Since $v<0$ and $W(u),W(v)>0$, equation~\eqref{eq:energy} gives~\eqref{eq:concavity}. For $a=0$, the same conclusion follows from~\eqref{eq:P-a0}.
\end{proof}

\subsection{Parameter classification}\label{subsec:classification}
Assume $I_1,I_2\ne\varnothing$, since otherwise no crossing periodic orbit exists. The case $a_1^2a_2^2T_1T_2\le0$ will be treated by concavity in Subsection~\ref{sec:concavity}. When $a_{1}^2a_{2}^2T_{1}T_{2}>0$, we may assume $T_1,T_2>0$ by simultaneous time reversal; see~\eqref{eq:time-reverse-map}. Set
\begin{equation}\label{eq:nu-sigma-lambda}
  \nu_i:=\frac{D_i}{T_i^2},\qquad i=1,2.
\end{equation}
These parameters fall into four disjoint cases:
\begin{enumerate}
  \item $a_1a_2<0$ (Subsection~\ref{sec:mixed});
  \item $a_1,a_2>0$ and $\nu_1\ne\nu_2$ (Subsection~\ref{sec:positive});
  \item $a_1,a_2<0$ and $\nu_1\ne\nu_2$ (Subsection~\ref{sec:negative});
  \item $a_1a_2>0$ and $\nu_1=\nu_2$ (Subsection~\ref{sec:same-shape}).
\end{enumerate}
The proofs are given in Section~\ref{sec:regimes}. Case~(4) uses scaling; cases~(1)--(3) use the auxiliary function $Y$ and the counting criterion developed in Subsection~\ref{sec:universal}. 

\section{The matching equation and a uniform counting criterion}\label{sec:matching}

\subsection{The matching equation and correspondence with limit cycles}\label{sec:geometry}

Since $P_1$ and $P_2$ are strictly decreasing on $I_1$ and $I_2$, the full Poincaré return map $\Pi(u):=\delta+P_2^{-1}(P_1(u)-\delta)$ is strictly increasing and real analytic wherever defined, and hence admits no periodic points of minimal period greater than one~\cite[Section~3]{FreirePonceTorres2012Canonical}. Consequently, every crossing periodic orbit of~\eqref{eq:lienard-canonical} meets $\Sigma^+$ at a unique entry point $(0,u_1)$ and $\Sigma^-$ at a unique exit point $(0,P_1(u_1))$. By Subsection~\ref{subsec:model}, such an orbit closes if and only if $(u_1,u_2)\in I_1\times I_2$ satisfies the matching system
\begin{equation}\label{eq:matching}
  u_1-u_2=\delta,\qquad P_1(u_1)-P_2(u_2)=\delta,\qquad (u_1,u_2)\in I_1\times I_2.
\end{equation}
Eliminating $u_2=u_1-\delta$ reduces the problem of counting crossing limit cycles of~\eqref{eq:raw-system} to counting the isolated zeros of the displacement function
\begin{equation}\label{eq:delta}
  \Delta_\delta(u):=P_2(u-\delta)+\delta-P_1(u),\qquad u\in I_\delta:=I_1\cap(\delta+I_2).
\end{equation}

For each half-return map $P_i$, define
\begin{equation}\label{eq:chord}
  \ell_i(u):=\frac{u-P_i(u)}{2},\qquad
  m_i(u):=\frac{u+P_i(u)}{2},\qquad u\in I_i.
\end{equation}
Since $\ell_i'=(1-P_i')/2>0$, $\ell_i:I_i\to J_i:=\ell_i(I_i)$ is a real-analytic diffeomorphism. Let $u_i:=\ell_i^{-1}$ and $M_i(\ell):=m_i(u_i(\ell))$ for $\ell\in J_i$. Then
\[
  u_i(\ell)=M_i(\ell)+\ell,\qquad
  P_i(u_i(\ell))=M_i(\ell)-\ell.
\]
In these coordinates,~\eqref{eq:matching} is equivalent to
\[
  \ell_1=\ell_2=\ell,\qquad M_1(\ell_1)-M_2(\ell_2)=\delta,\qquad
  (\ell_1,\ell_2)\in J_1\times J_2.
\]
Equivalently,
\begin{equation}\label{eq:chord-matching}
  \mathcal{B}(\ell):=M_1(\ell)-M_2(\ell)=\delta,\qquad \ell\in J_1\cap J_2.
\end{equation}
\begin{figure}[htbp]
  \centering
  \includegraphics[width=\linewidth]{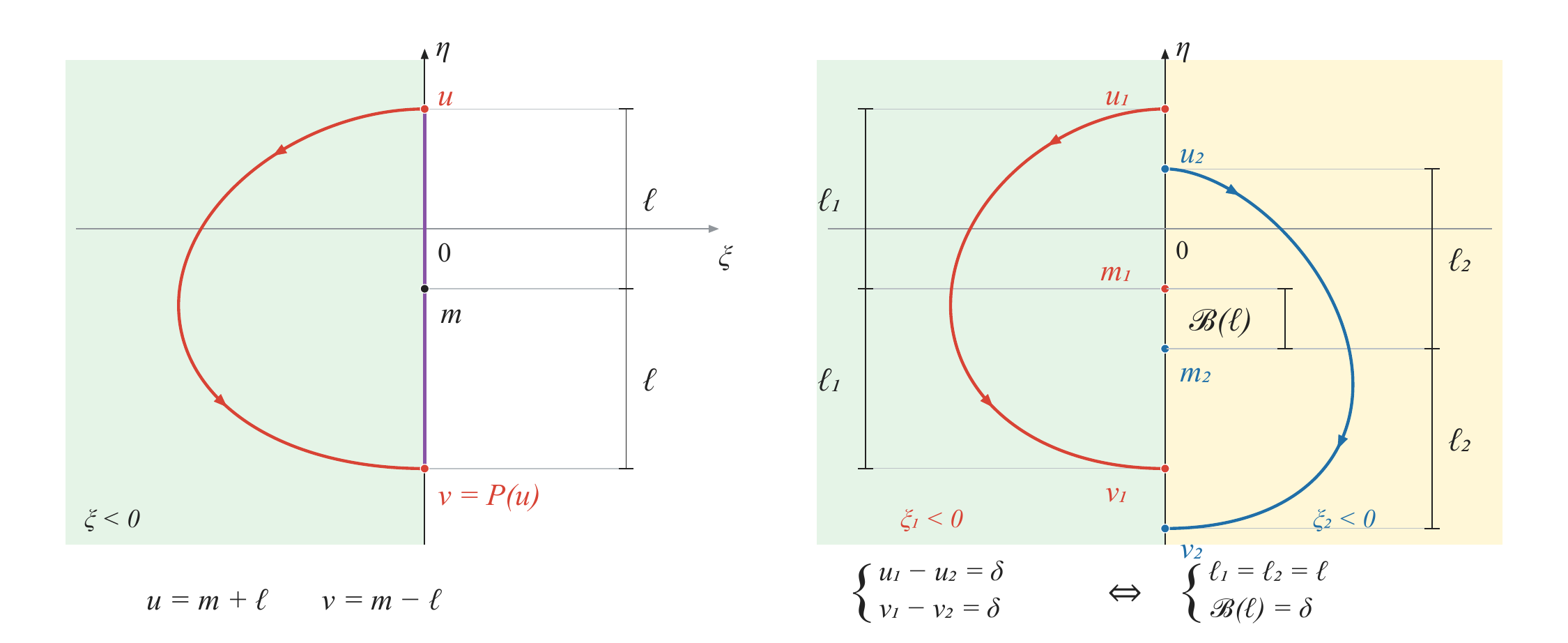}
  \caption{Boundary endpoints and their half-length and midpoint coordinates. The second half-plane is reflected for display.}\label{fig:half-length-matching}
\end{figure}

The change of variable $\ell=\ell_1(u)$ preserves isolation of solutions. Hence the crossing limit cycles of~\eqref{eq:raw-system} correspond bijectively to the isolated solutions of~\eqref{eq:chord-matching}.
\subsection{A counting criterion for the matching equation}\label{sec:universal}
Consider the case $a_1^2a_2^2T_1T_2>0$ identified in Subsection~\ref{subsec:classification}. The transformation $(t,\xi,\eta)\mapsto(-t,\xi,-\eta)$ changes $T$ to $-T$, leaves $D,a$ unchanged, and gives
\begin{equation}\label{eq:time-reverse-map}
  P_{-T,D,a}(u)=-P_{T,D,a}^{-1}(-u),\qquad u\in-P_{T,D,a}(I).
\end{equation}
Applied to both half-maps, $(u_1,u_2)\mapsto(-P_1(u_1),-P_2(u_2))$ is a homeomorphism between the matching solution sets for $(T_1,T_2,\delta)$ and $(-T_1,-T_2,-\delta)$. Thus the number of isolated solutions is unchanged, and we may assume
\begin{equation}\label{eq:positive-T}
  T_1,T_2>0 \qquad\text{and}\qquad a_1a_2\ne0.
\end{equation}

For a half-map, omit the subscript $i$. Equations~\eqref{eq:energy} and~\eqref{eq:concavity} give $P(u)<-u$ and $P''(u)<0$. Set
\[
  s=-P'(u),\qquad K=-P'(I).
\]
Since $\mathrm{d}s/\mathrm{d}u=-P''(u)>0$, the inverse function theorem gives an analytic inverse $u=u(s)$ on the open interval $K$. Evaluating $\ell,m$ at $u(s)$, define
\begin{equation}\label{eq:s-h-K}
  h(s):=-\frac{m}{\ell}\in(0,1),\qquad
  \kappa(s):=-\ell P''(u(s))>0.
\end{equation}
Then $u=\ell(1-h)$ and $P(u)=-\ell(1+h)$. For the two half-maps, let $K_i=-P_i'(I_i)$ and set
\begin{equation}\label{eq:Y-comparison}
  Y(s):=\kappa_2(s)-\kappa_1(s),\qquad s\in K_1\cap K_2.
\end{equation}
For $\ell\in J_i$, set $s_i(\ell):=-P_i'(u_i(\ell))$. The chain rule gives
\begin{equation}\label{eq:Mprime}
  M_i'(\ell)=\frac{1-s_i(\ell)}{1+s_i(\ell)},\qquad
  M_i''(\ell)=\frac{4P_i''(u_i(\ell))}{(1+s_i(\ell))^3}.
\end{equation}
The zeros of $Y$ provide the following bound for the solutions of~\eqref{eq:chord-matching}.

\begin{theorem}\label{thm:uniform-count}
Suppose that $T_i>0$ and $a_i\ne0$ for $i=1,2$. If $Y(s)=\kappa_2(s)-\kappa_1(s)$ has at most one zero on $K_1\cap K_2$, then for every $\delta\in\mathbb{R}$, the matching equation $\mathcal{B}(\ell)=\delta$ has at most three distinct solutions on $J_1\cap J_2$.
\end{theorem}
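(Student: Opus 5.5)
By Rolle's theorem it suffices to show that $\mathcal{B}''$ has at most one zero on $J_1\cap J_2$, or more precisely that $\mathcal{B}'$ has at most two zeros there. Then $\mathcal{B}-\delta$ has at most three zeros for every $\delta$. The natural tool is a change of variable to the slope parameter $s$. In that variable $\mathcal{B}'$ is controlled by comparing $s_1(\ell)$ with $s_2(\ell)$, and $\mathcal{B}''$ by comparing the curvature quantities $\kappa_i$.

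First I locate the critical points of $\mathcal{B}$. By \eqref{eq:Mprime},
\[
\mathcal{B}'(\ell)=\frac{1-s_1}{1+s_1}-\frac{1-s_2}{1+s_2}=\frac{2(s_2-s_1)}{(1+s_1)(1+s_2)}.
\]
Hence $\mathcal{B}'(\ell)=0$ exactly when $s_1(\ell)=s_2(\ell)$. Each $s_i$ is strictly increasing in $\ell$, since $ds_i/d\ell=-P_i''(u_i)\,u_i'(\ell)>0$ with $u_i'=1/\ell_i'>0$. So I can reparametrize by $s$ and consider the inverse functions $\ell=L_i(s):=\ell_i(u_i(s))$ on $K_i$. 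The condition $s_1(\ell)=s_2(\ell)$ at a common $\ell$ is equivalent to $L_1(s)=L_2(s)$ with $s\in K_1\cap K_2$. I then compute
\[
L_i'(s)=\frac{d\ell}{du}\frac{du}{ds}=\frac{(1+s)/2}{-P_i''(u_i(s))}=\frac{(1+s)\,\ell}{2\kappa_i(s)}.
\]
This is why $\kappa=-\ell P''$ was introduced. It follows that $(\log L_i)'(s)=(1+s)/(2\kappa_i(s))$, and therefore
\[
\frac{d}{ds}\log\frac{L_2}{L_1}=\frac{1+s}{2}\left(\frac{1}{\kappa_2}-\frac{1}{\kappa_1}\right)=-\frac{(1+s)\,Y(s)}{2\kappa_1\kappa_2}.
\]
Consequently the zeros of $(\log(L_2/L_1))'$ on $K_1\cap K_2$ are exactly the zeros of $Y$. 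If $Y$ has at most one zero, then $\log(L_2/L_1)$ is piecewise strictly monotone on at most two intervals, so it has at most two zeros. Thus $L_1=L_2$ at most twice, and $\mathcal{B}'$ vanishes at most twice on $J_1\cap J_2$.

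Second, I need to make sure the map from zeros of $\mathcal{B}'$ in $\ell$ to solutions of $L_1=L_2$ in $s$ is injective. If $\mathcal{B}'(\ell)=0$, then $s:=s_1(\ell)=s_2(\ell)$ lies in $K_1\cap K_2$ and satisfies $L_1(s)=L_2(s)=\ell$. Distinct $\ell$ give distinct $s$ because $s_1$ is injective. So the number of zeros of $\mathcal{B}'$ is at most two.

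Finally, Rolle's theorem applies because $\mathcal{B}$ is analytic on the interval $J_1\cap J_2$: between any two consecutive solutions of $\mathcal{B}=\delta$ there lies a zero of $\mathcal{B}'$. Hence there are at most three solutions.

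The main obstacle is the bookkeeping of domains. The set $J_1\cap J_2$ is an interval in $\ell$, while $K_1\cap K_2$ is an interval in $s$. The $s$-values arising from zeros of $\mathcal{B}'$ must land in the region where both $L_i$ are defined, and the monotonicity argument for $\log(L_2/L_1)$ must be carried out on the single interval $K_1\cap K_2$. Intervals are connected, so this works. I also need $L_i>0$, which holds since $\ell_i>0$ because $u>0>P(u)$.

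If $Y$ has exactly one zero but does not change sign there, the count only improves. If $Y\equiv0$, the hypothesis fails, so this case is excluded. This also justifies the separate treatment of the case $\nu_1=\nu_2$.
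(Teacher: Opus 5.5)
Your proof is correct and follows the paper's argument essentially step for step. Critical points of $\mathcal{B}$ force $s_1(\ell)=s_2(\ell)$, and reparametrizing by $s$ gives $(\log \ell_i)'(s)=(1+s)/(2\kappa_i(s))$, so the log-ratio of the half-lengths has derivative proportional to $Y$ and hence at most two zeros; injectivity of $s_1$ then gives at most two zeros of $\mathcal{B}'$, and Rolle's theorem finishes. The paper handles $K_1\cap K_2=\varnothing$ as a separate case, but your observation that every zero of $\mathcal{B}'$ yields a point of $K_1\cap K_2$ already covers it.
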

\begin{proof}
If $K_1\cap K_2=\varnothing$, then $\BB'$ never vanishes by~\eqref{eq:Mprime}, and Rolle's theorem gives at most one solution. Otherwise, set
\[
  \mathcal L(s):=\log\ell_1(s)-\log\ell_2(s),\qquad s\in K_1\cap K_2.
\]
The chain rule gives $(\log\ell_i)'(s)=(1+s)/(2\kappa_i(s))$, and therefore
\begin{equation}\label{eq:log-count}
  \mathcal L'(s)=\frac{1+s}{2\kappa_1(s)\kappa_2(s)}Y(s).
\end{equation}
Thus $\mathcal L$ has at most two zeros. By~\eqref{eq:Mprime},
\[
  \BB'(\ell)=0
  \quad\Longrightarrow\quad
  s_1(\ell)=s_2(\ell)=:s,\qquad
  \ell_1(s)=\ell_2(s)=\ell,\qquad \mathcal L(s)=0.
\]
Strict monotonicity of $s_1(\ell)$ makes this correspondence injective. Hence $\BB'$ has at most two zeros, and Rolle's theorem gives the bound.
\end{proof}

\section{Proof by parameter classes}\label{sec:regimes}
We consider the parameter classes in Subsection~\ref{subsec:classification}.

\subsection{Cases settled by concavity}\label{sec:concavity}

\begin{proposition}\label{prop:concavity-cases}
If $a_{1}^2a_{2}^2T_{1}T_{2}\leq 0$, then~\eqref{eq:raw-system} has at most two crossing limit cycles. In particular, if $a_1T_1=a_2T_2=0$, then~\eqref{eq:raw-system} has at most one crossing limit cycle.
\end{proposition}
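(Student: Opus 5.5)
We work with the displacement function $\Delta_\delta(u)=P_2(u-\delta)+\delta-P_1(u)$ on $I_\delta$ from~\eqref{eq:delta}, whose isolated zeros are in bijection with the crossing limit cycles. The idea is that under the hypothesis $a_1^2a_2^2T_1T_2\le0$ the concavities of $P_1$ and $P_2$ given by~\eqref{eq:concavity} are never strictly in competition. Differentiating twice, we obtain
\[
\Delta_\delta''(u)=P_2''(u-\delta)-P_1''(u),
\qquad
\operatorname{sgn}P_i''=-\operatorname{sgn}(a_i^2T_i).
\]
Since $a_1^2a_2^2T_1T_2\le0$, the quantities $a_1^2T_1$ and $a_2^2T_2$ are either of opposite strict signs, or at least one of them vanishes.

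In the first case, say $a_1^2T_1>0>a_2^2T_2$, we have $P_1''<0<P_2''$. Hence $\Delta_\delta''>0$ on $I_\delta$. The symmetric subcase gives $\Delta_\delta''<0$.

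In the second case, suppose that $a_1^2T_1=0$ while $a_2^2T_2\ne0$. Then $P_1''\equiv0$ and $P_2''$ has a strict sign. The case where $P_1''$ is strict and $P_2''\equiv0$ is analogous.

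In all these situations $\Delta_\delta''$ has a strict constant sign on the interval $I_\delta$, which is an interval because it is the intersection of two intervals. Then $\Delta_\delta$ is strictly convex or strictly concave, so it is not identically zero. By Rolle's theorem it has at most two zeros, and therefore there are at most two crossing limit cycles.

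The remaining situation is $a_1^2T_1=a_2^2T_2=0$, which is exactly the condition $a_1T_1=a_2T_2=0$. Here both $P_i''\equiv0$, so each $P_i$ is affine on its interval. For $a_i=0$ this is~\eqref{eq:P-a0}. For $T_i=0$ with $a_i\ne0$, the derivative formula $P'(u)=\frac{u}{v}\ee^{T\tau}$ together with $u+P(u)=0$ from~\eqref{eq:sgnu+v} gives $P_i(u)=-u$. Consequently $\Delta_\delta$ is affine on $I_\delta$. Either it has at most one zero, or it vanishes identically, in which case there is a continuum of crossing periodic orbits and none of them is isolated. Either way there is at most one crossing limit cycle.

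The main obstacle is the bookkeeping at degenerate points rather than any hard estimate. First, we must ensure that $P_i''$ is genuinely of strict sign on all of $I_i$. Lemma~\ref{lem:derivatives} gives this when $a_i\ne0$ and $T_i\ne0$, using the strict inequality in the energy identity~\eqref{eq:energy}. Second, in the $T_i=0$, $a_i\ne0$ subcases we must confirm that $P_i$ is exactly affine, since it is not merely assumed to have zero second derivative. Finally, we must note that the isolation requirement in Definition~\ref{def:crossing-cycle} rules out the identically-zero affine case.
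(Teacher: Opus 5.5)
Your proposal is correct and follows the paper's proof. Both arguments write $\Delta_\delta''(u)=P_2''(u-\delta)-P_1''(u)$ and use \eqref{eq:concavity} to get a strict constant sign whenever $a_1^2a_2^2T_1T_2\le0$ but not both $a_iT_i$ vanish, which by Rolle's theorem gives at most two zeros; when $a_1T_1=a_2T_2=0$, $\Delta_\delta$ is affine and so has at most one isolated zero. Your extra bookkeeping (checking that $P_i(u)=-u$ exactly when $T_i=0$, and excluding the identically-zero affine case through the isolation requirement) only makes explicit what the paper's two-line proof leaves implicit.
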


\begin{proof}
If $a_1T_1=a_2T_2=0$, then $\Delta_\delta''=0$, so $\Delta_\delta$ has at most one isolated zero. Otherwise,~\eqref{eq:concavity} gives $\Delta_\delta''\ne0$ on $I_\delta$, and two applications of Rolle's theorem give at most two zeros. The cycle bounds follow from Subsection~\ref{sec:geometry}.
\end{proof}

\subsection{The case \texorpdfstring{$a_1a_2<0$}{a1 a2 < 0}}\label{sec:mixed}
Without loss of generality, assume that \( a_1>0>a_2\).

\begin{lemma}\label{lem:mixed-heights}
  For every $s\in K_1\cap K_2$,
  \begin{equation}\label{eq:mixed-heights}
    0<h_1(s)<\frac{s-1}{s+1}<h_2(s)<1.
  \end{equation}
\end{lemma}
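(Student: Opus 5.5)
The plan is to turn each of the two middle inequalities in \eqref{eq:mixed-heights} into an inequality about a single half-map at one point. The outer bounds are immediate. For $u\in I_i$ we have $u>0>P_i(u)$, so $\ell_i>0$. Since $T_i>0$, \eqref{eq:sgnu+v} gives $m_i<0$. Also $u=\ell(1-h)>0$, so $0<h_i<1$.

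For the middle inequalities, fix one half-map and a point $u\in I$ with $v=P(u)$ and $s=-P'(u)$. Put $r:=-v/u=(1+h)/(1-h)>1$; the bound $r>1$ holds because $v<-u$. The map $x\mapsto(x-1)/(x+1)$ is increasing and sends $r$ to $h$. Hence $h<(s-1)/(s+1)$ if and only if $s>r$, and $h>(s-1)/(s+1)$ if and only if $s<r$. By \eqref{eq:P-derivatives}, $s=u\,W(v)/(|v|\,W(u))$, so $s>r$ is equivalent to $W(v)/v^2>W(u)/u^2$. Since $W(y)/y^2=D-aT/y+a^2/y^2$, a short factorization gives
\[
  \frac{W(v)}{v^2}-\frac{W(u)}{u^2}
  =a\Bigl(\frac1u-\frac1v\Bigr)\Bigl[T-a\Bigl(\frac1u+\frac1v\Bigr)\Bigr].
\]
Because $1/u-1/v>0$, the sign of this difference is the sign of $a\,\bigl(T\,uv-a(u+v)\bigr)\cdot(-1)$; here I multiplied the bracket by $uv<0$. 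The lemma therefore reduces to a single claim: $T\,uv<a(u+v)$ on $I_1$ (where $a_1>0$), and $T\,uv<a(u+v)$ again on $I_2$ (where $a_2<0$). In the second case the outer factor $a_2<0$ flips the sign, which gives $s<r$ and hence $h_2>(s-1)/(s+1)$.

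To prove the claim, I will use the return-time parametrization \eqref{eq:UV-param}:
\[
  u+v=\frac{a(B-N)}{S},\qquad uv=-\frac{a^2BN}{S^2}.
\]
Substituting and cancelling $a^2/S^2>0$, the claim becomes $-TBN<S(B-N)$, with all functions evaluated at $\tau$. Define
\[
  F(\tau):=S(\tau)B(\tau)-S(\tau)N(\tau)+TB(\tau)N(\tau).
\]
The claim is then $F(\tau)>0$. To check the direction carefully: the substitution multiplies $a(u+v)$ by $S^2/a^2$, which leaves $a^2(B-N)S/a^2\cdot\ldots$ in the form $S(B-N)$, so the sign of $S$ is what matters. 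For $a>0$ we have $S(\tau)>0$, as shown in the proof of Lemma~\ref{lem:derivatives}. For $a<0$, the relation $W(u)S(\tau)=a(u-v)$ with $W(u)>0$ gives $S(\tau)<0$, and $\tau$ ranges over a different window. So I will need $F>0$ on the admissible return times in both cases.

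The plan for that step is to differentiate $F$ using $B'=S$, $N'=TN+S$ and $S'=1+TS-DB$ from \eqref{eq:BN-identities}. This should express $F'$ through the same building blocks, and from $F(0)=0$, expansions near $\tau=0$, and a comparison argument I aim to show that $F$ keeps a fixed sign on the return-time interval. I would first test the case $a=0$ limit, $\Delta<0$, explicitly.

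I expect this last step to be the main obstacle, because it is a genuine transcendental inequality in $\tau$ with parameters $T$ and $D$. It must hold across all $\Delta$ regimes. In the case $a<0$ it must hold on the window $S(\tau)<0$, which lies past the first zero of $S$, so a pure Taylor argument at $0$ is not enough. If direct differentiation fails, I would fall back on a geometric argument along the orbit. The idea is that $T\,uv-a(u+v)$ measures the position of the endpoints relative to the nullcline $\eta=T\xi$ and the equilibrium $\xi=a/D$, and such sign information could be read off from the energy identity \eqref{eq:energy}.
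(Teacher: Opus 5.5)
Your reduction is correct. The chain $h<\frac{s-1}{s+1}\iff s>r\iff W(v)/v^2>W(u)/u^2$ and your factorization are right. The reduced claim $T\,uv<a(u+v)$ is just the paper's identity $\frac{(s+1)h-(s-1)}{2(1-h)}=\frac{2a^2h-aT\ell(1-h^2)}{(1-h^2)W(u)}$ rewritten, since $T\,uv-a(u+v)=\ell\bigl(2ah-T\ell(1-h^2)\bigr)$.

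The gap is that you never prove the reduced claim. The route through $F(\tau)=SB-SN+TBN>0$ is only announced, and you yourself flag it as the main obstacle and keep a fallback in reserve. Nothing in the proposal establishes $F>0$ on either return-time window, so as written the lemma is not proved. Your remark that ``the sign of $S$ is what matters'' is also mistaken. You multiplied by $S^2/a^2>0$, so the inequality's direction is preserved whatever the sign of $S$.

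The missing step is elementary and needs no transcendental inequality in $\tau$.

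\emph{On $I_2$, where $a_2<0<T_2$.} Since $uv<0$, we get $T_2uv<0$. By \eqref{eq:sgnu+v}, $u+v<0$, so $a_2(u+v)>0$. Hence $T_2uv<0<a_2(u+v)$ holds immediately, and the window $S(\tau)<0$ that worried you is irrelevant.

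\emph{On $I_1$, where $a_1>0$.} By your own chain of equivalences, the claim is just $s>r$, that is, $-P_1'(u)>-P_1(u)/u$. This follows from strict concavity \eqref{eq:concavity} together with the continuous extension $P_1(0)=0$ from Proposition~\ref{prop:half-map-domain}. By the mean value theorem, $-P_1(u)/u=-P_1'(\theta)$ for some $\theta\in(0,u)$, and $-P_1'$ is strictly increasing.

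This is exactly how the paper argues: concavity with $P_1(0)=0$ for $h_1$, and the rational formula \eqref{eq:P-derivatives} read off with the sign of $a_2$ for $h_2$.
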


\begin{proof}
For $u=u_1(s)$, strict concavity and $P_1(0)=0$ imply
\[
  s=-P_1'(u)>-\frac{P_1(u)}u=\frac{1+h_1}{1-h_1}>1.
\]
This gives $h_1<(s-1)/(s+1)$. For the second map, substituting $u_2=\ell_2(1-h_2)$ and $P_2(u_2)=-\ell_2(1+h_2)$ into~\eqref{eq:P-derivatives} gives
\[
  \frac{(s+1)h_2-(s-1)}{2(1-h_2)}
  =\frac{2h_2a_2^2-a_2T_2\ell_2(1-h_2^2)}
  {(1-h_2^2)W_2(u_2)}>0,
\]
since $a_2<0$. Hence $h_2>(s-1)/(s+1)$.
\end{proof}

\begin{lemma}\label{lem:downcrossing}
Let $f$ be continuous on an open interval $I$ and differentiable at its zeros. If $f'$ has a constant nonzero sign at the zeros of $f$, then $f$ has at most one zero in $I$.
\end{lemma}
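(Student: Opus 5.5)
The plan is a direct argument by contradiction, using only the intermediate value theorem and the definition of the derivative at a zero. Suppose $f$ has two distinct zeros $x_1<x_2$ in $I$. Assume without loss of generality that $f'>0$ at every zero of $f$; otherwise replace $f$ by $-f$, which changes neither the zeros nor the hypotheses. The aim is to show that between two consecutive zeros $f$ would have to pass from positive values to negative values, which produces a further zero strictly in between.

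First, I would pass to consecutive zeros. The zero set $Z=f^{-1}(0)\cap[x_1,x_2]$ is closed. Each zero $z$ is isolated, because $f'(z)\neq 0$: since $f(z)=0$ and $f'(z)>0$, the difference quotient $f(x)/(x-z)$ is positive for $x$ near $z$, $x\neq z$. Hence $f(x)>0$ on some interval $(z,z+\varepsilon)$ and $f(x)<0$ on some interval $(z-\varepsilon,z)$. A compact set all of whose points are isolated is finite, so $Z$ is finite. I may therefore take $x_1<x_2$ to be consecutive zeros, so that $f$ has no zero in $(x_1,x_2)$.

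Next, I would apply the sign information at both endpoints. Near $x_1$ on the right, $f>0$; near $x_2$ on the left, $f<0$. Choose points $p\in(x_1,x_2)$ close to $x_1$ with $f(p)>0$, and $q\in(p,x_2)$ close to $x_2$ with $f(q)<0$. By continuity and the intermediate value theorem, $f$ vanishes somewhere in $(p,q)\subset(x_1,x_2)$. This contradicts consecutiveness, so $f$ has at most one zero.

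There is no real obstacle here. The only point needing care is that differentiability is assumed only at the zeros, so the local sign behaviour must come from the difference quotient rather than from any monotonicity of $f$. Note also that the finiteness step is not strictly necessary: one could take $x_2'=\inf\{z\in Z: z>x_1\}$ directly, which is a zero greater than $x_1$ because zeros are isolated.
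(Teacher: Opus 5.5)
Your proof is correct and takes essentially the same route as the paper. Both arguments read off the sign of $f$ on either side of a zero from the difference quotient, then apply an extremal-zero argument together with the intermediate value theorem. The paper's version is shorter: it picks $x<y$ between the two zeros with $f(x)$ and $f(y)$ of opposite signs, takes the first zero $c$ of $f$ in $[x,y]$, and gets the contradiction from the one-sided quotient at $c$, so it never needs your isolation-and-finiteness step (which, as you note, can be dropped).
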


\begin{proof}
Replacing $f$ by $-f$ if necessary, assume $f'<0$ at every zero. If $\alpha<\beta$ were zeros, there would exist $\alpha<x<y<\beta$ with $f(x)<0<f(y)$. Let $c=\min\{t\in[x,y]:f(t)=0\}$. Then $f<0$ on $[x,c)$, so 
\[
  f'(c)=\lim_{t\uparrow c}\frac{f(t)}{t-c}\ge0,
\]
a contradiction.
\end{proof}

\begin{proposition}\label{prop:mixed}
  If $T_1,T_2>0$ and $a_1a_2<0$, then
  \[
    M_1(\ell)-M_2(\ell)=\delta,\qquad \ell\in J_1\cap J_2,
  \]
  has at most three distinct solutions.
\end{proposition}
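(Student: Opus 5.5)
The plan is to reduce Proposition~\ref{prop:mixed} to Theorem~\ref{thm:uniform-count}. We are in the setting $T_1,T_2>0$, $a_1>0>a_2$. So it suffices to show that $Y=\kappa_2-\kappa_1$ has at most one zero on $K_1\cap K_2$. If $K_1\cap K_2=\varnothing$ there is nothing to prove, so we work on this open interval.

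To control the zeros of $Y$ we use Lemma~\ref{lem:downcrossing}. The task is to show that at every zero of $Y$ the derivative $Y'$ has a fixed nonzero sign, which we expect to be positive. This should come from the separation $h_1(s)<(s-1)/(s+1)<h_2(s)$ of Lemma~\ref{lem:mixed-heights}.

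First, we need formulas for $\kappa_i'(s)$ in terms of $s$, $h_i$ and $\kappa_i$. Since $\mathrm{d}u/\mathrm{d}s=-1/P''$, we have $\mathrm{d}\ell/\mathrm{d}s=(1+s)/(2\kappa)$. Differentiating $u=\ell(1-h)$ gives $\mathrm{d}u/\mathrm{d}s=\ell/\kappa$, so
\[
  -\ell h'=\frac{\ell}{\kappa}-(1-h)\frac{1+s}{2\kappa},
  \qquad\text{that is,}\qquad
  h'(s)=\frac{(1+s)(1-h)-2}{2\kappa}.
\]
For $\kappa'=-\ell'P''-\ell\,\mathrm{d}P''/\mathrm{d}s$ we need $P'''$. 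We obtain it by differentiating the rational formula \eqref{eq:P-derivatives}, using $P'=-s$ and $W(v)/W(u)=s\,u/|v|$. Written in the homogeneous variables $(\ell,h)$, the relation $P'=uW(v)/(vW(u))$ with $u=\ell(1-h)$ and $v=-\ell(1+h)$ is one algebraic equation linking $s,h,\ell$ and the ratio $a/\ell$ (with $T$ scaled out). Likewise $\kappa$ becomes an explicit rational expression in $h$, $s$ and $\ell/a$. The intended outcome is
\[
  \frac{\kappa'}{\kappa}=\Phi\bigl(s,h\bigr)
\]
for an explicit function $\Phi$ that is monotone in $h$, or at least changes behaviour across $h=(s-1)/(s+1)$. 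A cleaner alternative is to solve the $P'$ relation for $a/\ell$ in terms of $(s,h)$ and substitute, so that $\kappa/\ell$ is expressed through $(s,h)$ alone.

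At a zero $s_0$ of $Y$ we have $\kappa_1=\kappa_2=\kappa$, so
\[
  Y'(s_0)=\kappa\bigl(\Phi(s_0,h_2)-\Phi(s_0,h_1)\bigr).
\]
The sign of $Y'(s_0)$ then follows from $h_1<h_2$ together with monotonicity of $\Phi$ in $h$ on the relevant range. We will also use that the two sides have opposite signs of $a$, and that $a$ enters $\Phi$ through the sign-sensitive term $a T\ell$ seen in the proof of Lemma~\ref{lem:mixed-heights}. If $\Phi$ is not monotone in $h$, we will instead prove the two one-sided bounds $\Phi(s,h)>\Phi_*(s)$ for $h>(s-1)/(s+1)$ and $\Phi(s,h)<\Phi_*(s)$ for $h<(s-1)/(s+1)$. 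The threshold function $\Phi_*$ will be read off at $h=(s-1)/(s+1)$, which is the chord-tangent borderline.

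Once the sign is fixed, Lemma~\ref{lem:downcrossing} gives at most one zero of $Y$, and Theorem~\ref{thm:uniform-count} yields at most three solutions. The main obstacle is the middle step: obtaining a tractable closed form for $\kappa'/\kappa$, and establishing that it separates across $h=(s-1)/(s+1)$ uniformly in the parameters $D_i$, $a_i$. We expect this to require careful elimination of $a/\ell$ and $D/T^2$ using $W$.
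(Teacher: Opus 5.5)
Your reduction is the paper's: Theorem~\ref{thm:uniform-count} plus Lemma~\ref{lem:downcrossing}, with the sign of $Y'$ at a zero of $Y$ decided by $h_1<h_2$ from Lemma~\ref{lem:mixed-heights}. The gap is the step that carries all the content. You need an expression for $\kappa_i'(s)$ that is the \emph{same} function for both half-maps, and you need its monotonicity in $h$. Your proposal states this only as an intended outcome, and the route you sketch for it rests on a wrong parameter count. Write $W(\ell\xi)=A\xi^2+B\xi+C$ with $(A,B,C)=(D\ell^2,-aT\ell,a^2)$. Even after scaling $T$ out, two unknown ratios remain, $D\ell^2/a^2$ and $T\ell/a$, and $P'=-s$ gives only one equation between them. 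So $a/\ell$ cannot be solved for in terms of $(s,h)$. The rational formulas leave one ratio free, and it must be fixed either by $\nu$ or by $\kappa$. Consequently the elimination you describe does not produce a function $\Phi(s,h)$. If you close the system by keeping $\nu=D/T^2$ and $\sgn a$, then $\Phi$ becomes system-dependent. The two half-maps have opposite signs of $a$ and in general different $\nu_i$, so the formula $Y'(s_0)=\kappa(\Phi(s_0,h_2)-\Phi(s_0,h_1))$ with a single $\Phi$ is no longer available, and monotonicity of each $\Phi_i$ in $h$ decides nothing. Letting $a$ enter the derivative formula through $aT\ell$ is the opposite of what is needed: the sign of $a$ should enter only through Lemma~\ref{lem:mixed-heights}.

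The missing idea is to use $\kappa$ itself as the third coordinate. The relation $W(v)/W(u)=s(1+h)/(1-h)$ and the $P''$ formula in \eqref{eq:P-derivatives} determine $(A,B,C)$ from $(h,s,\kappa)$ up to the factor $W(u)$, as in \eqref{eq:reconstruction}. Differentiating the rational formula for $P'$ twice then gives the parameter-free identity $\ell^2P'''(u)=\mathcal J(h,s,\kappa)$ of \eqref{eq:J}. It holds for every $T>0$, $a\ne0$ and every $D$. Combine it with $\kappa_s=\frac{1+s}{2}-\ell^2P'''(u)/\kappa$. At a zero of $Y$ you can then insert the common value $\kappa=\kappa_1=\kappa_2$ into one function, which gives $Y'=(\mathcal J(h_1,s,\kappa)-\mathcal J(h_2,s,\kappa))/\kappa$. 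You still need the explicit computation \eqref{eq:J-h-large}, showing $\partial_h\mathcal J>0$ for $s\ge1$ and $0<h<1$. This suffices because $K_1\subset(1,\infty)$ when $a_1>0$. The conclusion is $Y'<0$ at every zero of $Y$, not $>0$ as you guessed. The threshold $(s-1)/(s+1)$ is used only to obtain $h_1<h_2$. A minor slip: $\mathrm{d}\ell/\mathrm{d}s=(1+s)\ell/(2\kappa)$, whereas what you wrote is $\mathrm{d}\log\ell/\mathrm{d}s$; your formula for $h'$ is nevertheless correct.
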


\begin{proof}
By Lemma~\ref{lem:downcrossing} and Theorem~\ref{thm:uniform-count}, it suffices to prove that $Y'(s)<0$ whenever $Y(s)=0$.

Since $\kappa=-\ell P''$, $\ell_u=(1+s)/2$ and $s_u=\kappa/\ell$, the chain rule gives
\begin{equation}\label{eq:kappa-s}
  \kappa_s=\frac{1+s}{2}-\frac{\ell^2P'''(u)}{\kappa}.
\end{equation}
At a zero of $Y$, put $\kappa=\kappa_1(s)=\kappa_2(s)>0$. Then
\[
  Y'(s)=\frac{\ell_1(s)^2P_1'''(u_1(s))-\ell_2(s)^2P_2'''(u_2(s))}{\kappa}.
\]
For any half-return map with $T>0$ and $a\ne0$, Appendix~\ref{app:third-derivative} expresses the third derivative as
\begin{equation}\label{eq:J}
  \begin{aligned}
    \ell^2P'''(u)=\mathcal J(h,s,\kappa)
    &:=-\frac{(3h+1)\kappa^2}{2hs}\\
    &\quad+\frac{\kappa\bigl[(3h^2+1)(1-s)+4h(s+1)\bigr]}
    {2h(1-h^2)}.
  \end{aligned}
\end{equation}
Substitution gives
\begin{equation}\label{eq:contact-derivative}
  Y'(s)=\frac{\mathcal J(h_1(s),s,\kappa)-\mathcal J(h_2(s),s,\kappa)}{\kappa}.
\end{equation}
For fixed $s,\kappa>0$,
\begin{equation}\label{eq:J-h-large}
  \partial_h\mathcal J(h,s,\kappa)
  =\frac{\kappa}{2h^2s}\left(
  \kappa+\frac{s\bigl[(s-1)(1-h)^3(3h+1)+16h^3\bigr]}
  {(1-h^2)^2}\right),
\end{equation}
which is positive when $s\ge1,0<h<1$. Thus lemma~\ref{lem:mixed-heights} implies $Y'<0$, the conclusion follows. 
\end{proof}

\subsection{The case \texorpdfstring{$a_1,a_2>0,\nu_1\ne\nu_2$}{a1, a2 > 0,nu1!=nu2}}\label{sec:positive}
Strict concavity and $P_i(0)=0$ give
\begin{equation}\label{eq:positive-height}
  s>1,\qquad 0<h_i(s)<\frac{s-1}{s+1},\qquad \kappa_i(s)>0,
\end{equation}
as in the first part of Lemma~\ref{lem:mixed-heights}. The following lemma will be used to prove that $Y$ has at most one zero.

\begin{lemma}\label{lem:axis-contact}
  Let $f,g\in C^1(I)$ where \(I\) is an open interval. Suppose they never vanish simultaneously and satisfy
  \[
\begin{gathered}
    \sgn f'(s)=\sgn g(s)\quad\text{when }f(s)=0,\\
    \sgn g'(s)=\sgn f(s)\quad\text{when }g(s)=0.
  \end{gathered}
\]
  Then $g$ has at most one zero.
\end{lemma}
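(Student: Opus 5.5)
We argue by contradiction, assuming that $g$ has two distinct zeros, and use the sign conditions to show that $f$ is trapped between them. First we check that the zeros of $g$ are isolated. At a zero $s_0$ of $g$ we have $f(s_0)\neq0$, because $f$ and $g$ never vanish simultaneously. Hence $\sgn g'(s_0)=\sgn f(s_0)\neq0$, so every zero of $g$ is a transversal crossing. Transversal zeros are isolated. So if $g$ had two zeros, we could choose two consecutive ones $\alpha<\beta$, with $g\neq0$ on $(\alpha,\beta)$. Hence $g$ has a constant sign on $(\alpha,\beta)$; replacing $(f,g)$ by $(-f,-g)$, which preserves both hypotheses, we may assume $g>0$ there.

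Next we read off the sign of $f$ at the endpoints. Since $g$ leaves $\alpha$ upward, $g'(\alpha)\ge0$. Because $g'(\alpha)$ has the nonzero sign of $f(\alpha)$, in fact $g'(\alpha)>0$, and therefore $f(\alpha)>0$. At $\beta$, $g$ returns to zero from above, so $g'(\beta)\le0$, which forces $g'(\beta)<0$ and $f(\beta)<0$. By the intermediate value theorem, $f$ has a zero in $(\alpha,\beta)$.

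Finally we derive the contradiction from the behaviour of $f$ at its zeros in $(\alpha,\beta)$. At any zero $c$ of $f$ in this interval, $g(c)>0$, so the first condition gives $f'(c)>0$. Thus $f$ is continuous on $[\alpha,\beta]$ and $f'>0$ at each of its zeros in $(\alpha,\beta)$, yet $f(\alpha)>0>f(\beta)$. To make this precise, let $c:=\max\{t\in[\alpha,\beta]: f(t)=0\}$, which lies in $(\alpha,\beta)$. Then $f<0$ on $(c,\beta]$, so $f'(c)=\lim_{t\downarrow c}f(t)/(t-c)\le0$. This contradicts $f'(c)>0$. (Equivalently, one may apply Lemma~\ref{lem:downcrossing} on $(\alpha,\beta)$ to get at most one zero of $f$ there, which is an upcrossing, incompatible with the endpoint signs.) Hence $g$ has at most one zero.

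The only delicate points are isolating consecutive zeros of $g$ and turning the weak inequalities at $\alpha,\beta$ into strict ones. Both are handled by the nonvanishing of $f$ at zeros of $g$.
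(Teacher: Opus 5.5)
Your proof is correct, but it takes a different route from the paper.

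The paper does not localize between zeros at all. Since $f$ and $g$ have no common zero, at every zero of the product exactly one of the two terms in $(fg)'(s)=f'(s)g(s)+f(s)g'(s)$ survives, and the sign hypotheses make it positive. A single application of Lemma~\ref{lem:downcrossing} to $fg$ then shows that $fg$ has at most one zero.

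You instead suppose two consecutive zeros $\alpha<\beta$ of $g$. You use the transversality of $g$ to get $f(\alpha)>0>f(\beta)$, produce a zero of $f$ in $(\alpha,\beta)$ by the intermediate value theorem, and then rerun the downcrossing argument for $f$ there.

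Your version is self-contained and exposes the mechanism: between consecutive zeros of $g$, $f$ must change sign downward, yet it can only cross zero upward. It does need bookkeeping that the product trick avoids. You must isolate the zeros of $g$ so that consecutive ones exist; strictly, add that a closed set of isolated points in a compact subinterval is finite. You also need the normalization $(f,g)\mapsto(-f,-g)$.

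The product argument also gives a stronger conclusion for free. The zero set of $fg$ is the union of the zero sets of $f$ and $g$, so at most one of the two functions vanishes on $I$, and then only once. Your argument, together with its mirror image under $f\leftrightarrow g$ (which preserves the hypotheses), only bounds each function's zeros separately.
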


\begin{proof}
Since $f$ and $g$ have no common zero, the hypotheses give
\[
  (fg)'(s)=f'(s)g(s)+f(s)g'(s)>0
  \qquad\text{whenever }f(s)g(s)=0.
\]
Lemma~\ref{lem:downcrossing} implies that $fg$, and hence $g$, has at most one zero.
\end{proof}

We shall apply Lemma~\ref{lem:axis-contact} to $h_1-h_2$ and $Y$. Define
\begin{equation}\label{eq:shape-recovery}
  \mathcal V(h,s,\kappa):=
  \frac{\kappa(1-h^2)^2\bigl[2hs(s+1)-\kappa(1-h^2)\bigr]}
  {4h^2\bigl[s\bigl((s+1)h-(s-1)\bigr)-\kappa(1-h^2)\bigr]^2}
\end{equation}
for $0<h<1$ and $s,\kappa>0$.

\begin{lemma}\label{lem:shape-recovery}
Fix a system~\eqref{eq:sub-normal} with $T>0$, $a\ne0$, and nonempty half-return domain $I$. Let $K=-P'(I)$ be its corresponding parameter interval. Then $\mathcal V(h(s),s,\kappa(s))$ is defined for every $s\in K$ and
\[
  \mathcal V(h(s),s,\kappa(s))\equiv\nu,\qquad s\in K.
\]
\end{lemma}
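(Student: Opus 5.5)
The plan is to express $\nu=D/T^2$ directly from the local data $(u,v,P'(u),P''(u))$ via the rational formulas of Lemma~\ref{lem:derivatives}. Then substitute $u=\ell(1-h)$, $v=-\ell(1+h)$, $s=-P'(u)$ and $\kappa=-\ell P''(u)$, and check that the result is exactly $\mathcal V$.

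Since $\nu$ is invariant under the scaling $(T,D,a)\mapsto(cT,c^2D,\cdot)$ and the quantities $h,s,\kappa/\ell$ transform homogeneously, I would first reduce the count of unknowns. The relations in Lemma~\ref{lem:derivatives} give two equations. The first is
\[
  s\,vW(u)=-uW(v),
\]
which is linear in $(D,aT,a^2)$. The second, from the formula for $P''$, is
\[
  \kappa=-\ell\,\frac{a^2(v^2-u^2)W(v)}{v^3W(u)^2}.
\]
Using the first relation, the second becomes
\[
  \kappa=\frac{\ell\,a^2(v^2-u^2)\,s}{u\,v^2\,W(u)}.
\]

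The concrete steps would be the following.

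\textbf{Step 1.} From $s\,vW(u)+uW(v)=0$, i.e.
\[
  D\,uv(su+v)-aT\,uv(s+1)+a^2(sv+u)=0,
\]
solve for $D$ in terms of $aT$ and $a^2$.

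\textbf{Step 2.} From the $\kappa$ relation, $W(u)=a^2\ell(v^2-u^2)s/(\kappa uv^2)$. Write $W(u)=Du^2-aTu+a^2$ and combine with Step 1 to get two linear equations in $D/a^2$ and $T/a$. Solve them by Cramer's rule; both unknowns come out as rational functions of $(u,v,s,\kappa)$.

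\textbf{Step 3.} Form $\nu=D/T^2=(D/a^2)/(T/a)^2$, which eliminates $a$. Substitute $u=\ell(1-h)$, $v=-\ell(1+h)$ and $v^2-u^2=4h\ell^2$. Homogeneity cancels $\ell$, and the expression should simplify to $\mathcal V(h,s,\kappa)$.

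The factor $s\bigl((s+1)h-(s-1)\bigr)-\kappa(1-h^2)$ in the denominator should appear as the Cramer expression for $T/a$, up to factors. Its occurrence parallels the expression $\bigl((s+1)h-(s-1)\bigr)/\bigl(2(1-h)\bigr)$ computed in Lemma~\ref{lem:mixed-heights}. This is a useful consistency check.

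Definedness of $\mathcal V$ along $K$ needs $0<h<1$, which follows from \eqref{eq:sgnu+v} with $T>0$, together with $s,\kappa>0$. It also needs the bracket in the denominator to be nonzero. I would get the latter by showing the bracket is proportional to $T/a$, with a nonvanishing factor such as $a\,uv^2W(u)$-type terms. It is then nonzero because $T>0$ and $a\ne0$, and the determinant of the linear system is nonzero for the same reason.

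The main obstacle is the algebra in Steps 2–3. One has to confirm that the Cramer solution collapses to precisely the displayed numerator $\kappa(1-h^2)^2\bigl[2hs(s+1)-\kappa(1-h^2)\bigr]$ and the squared denominator, with the constant $4h^2$. I would organize the computation by first writing everything with $\ell=1$, which is allowed by homogeneity. I would then verify the identity in a special case before doing the general simplification. A convenient special case is $a>0$, $D\le T^2/4$ near $u\to0$, or any explicit numerical half-map.
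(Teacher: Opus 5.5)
Your plan is correct and is essentially the paper's proof: the paper writes $W(\ell\xi)=A\xi^2+B\xi+C$ with $(A,B,C)=(D\ell^2,-aT\ell,a^2)$. It then obtains three linear equations for $(A,B,C)$, scaled by $W(u)$, from the value $W(u)$ and the $P'$ and $P''$ formulas, and reads off $\nu=AC/B^2$. You instead normalize by $a^2$ and solve a $2\times2$ system for $D/a^2$ and $T/a$, which is the same linear algebra. Definedness is handled the same way, since the bracket in the denominator equals $2s(1-h)B/W(u)$ with $B=-aT\ell\neq0$; note that the determinant of your system is $u^2v(v-u)\neq0$ simply because $u>0>v$, not because $T>0$ and $a\neq0$.
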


\begin{proof}
Fix $s\in K$, set $u=u(s),h=h(s),\kappa=\kappa(s)$, and write
\[
  W(\ell\xi)=A\xi^2+B\xi+C,
  \qquad
  (A,B,C)=(D\ell^2,-aT\ell,a^2).
\]
Thus $B\ne0$ and $AC/B^2=D/T^2=\nu$. With $h=h(s)$ and $\kappa=\kappa(s)$, the endpoint relations and~\eqref{eq:P-derivatives} give
\[
  \begin{aligned}
    A(1-h)^2+B(1-h)+C&=W(u),\\
    A(1+h)^2-B(1+h)+C&=s\frac{1+h}{1-h}W(u),\\
    \frac{4hsC}{(1-h)(1+h)^2}&=\kappa W(u).
  \end{aligned}
\]
Solving yields
\begin{equation}\label{eq:reconstruction}
  \begin{aligned}
    A&=\big(\frac{1+s}{2(1-h)}-\frac{\kappa(1+h)}{4hs}\big)W(u),\\
    B&=\big(\frac{(s+1)h-(s-1)}{2(1-h)}-\frac{\kappa(1+h)}{2s}\big)W(u),\\
    C&=\big(\frac{\kappa(1-h)(1+h)^2}{4hs}\big)W(u).
  \end{aligned}
\end{equation}
Substitution into $AC/B^2$ gives~\eqref{eq:shape-recovery} and hence $\mathcal V(h(s),s,\kappa(s))\equiv \nu$. 
\end{proof}

\begin{proposition}\label{prop:positive}
  If $\nu_1=D_1/T_1^2\ne\nu_2=D_2/T_2^2$, the matching equation has at most three distinct solutions.
\end{proposition}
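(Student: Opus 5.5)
By Theorem~\ref{thm:uniform-count}, it suffices to show that $Y=\kappa_2-\kappa_1$ has at most one zero on $K_1\cap K_2$. To do this I will apply Lemma~\ref{lem:axis-contact} with $f:=h_1-h_2$ (possibly after a sign change, say $f=\pm(h_1-h_2)$ with the sign fixed by $\sgn(\nu_1-\nu_2)$) and $g:=Y$. The work is in verifying the three hypotheses of that lemma on $K_1\cap K_2$, using the invariant $\mathcal V$ of Lemma~\ref{lem:shape-recovery} and the third-derivative formula~\eqref{eq:J}.

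\emph{No common zeros.} If $h_1(s)=h_2(s)=h$ and $\kappa_1(s)=\kappa_2(s)=\kappa$ at some $s$, then Lemma~\ref{lem:shape-recovery} gives $\nu_1=\mathcal V(h,s,\kappa)=\nu_2$. This contradicts $\nu_1\ne\nu_2$. Hence $f$ and $Y$ never vanish simultaneously.

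\emph{Sign of $Y'$ at zeros of $Y$.} At a zero of $Y$, the computation in the proof of Proposition~\ref{prop:mixed} gives~\eqref{eq:contact-derivative}, namely $Y'(s)=\bigl[\mathcal J(h_1,s,\kappa)-\mathcal J(h_2,s,\kappa)\bigr]/\kappa$. Since $s>1$ by~\eqref{eq:positive-height}, formula~\eqref{eq:J-h-large} shows that $\mathcal J$ is strictly increasing in $h$. Therefore $\sgn Y'(s)=\sgn(h_1-h_2)$, which is the required relation $\sgn g'=\sgn f$ for $f=h_1-h_2$.

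\emph{Sign of $f'$ at zeros of $f$.} This is the main obstacle. I need a formula for $h'(s)$ in terms of $(h,s,\kappa)$ alone. From $u=\ell(1-h)$, $\ell_s=\ell(1+s)/(2\kappa)$ and $u_s=\ell/\kappa$, differentiating gives
\[
\frac{\ell}{\kappa}=\ell_s(1-h)-\ell h_s ,
\qquad\text{so}\qquad
h_s=\frac{(1+s)(1-h)-2}{2\kappa}=\frac{(s-1)-(s+1)h}{2\kappa}.
\]
At a zero of $f$, write $h_1=h_2=h$. Then
\[
f'=\frac{(s-1)-(s+1)h}{2}\Bigl(\frac1{\kappa_1}-\frac1{\kappa_2}\Bigr).
\]
The prefactor is positive by~\eqref{eq:positive-height}, so $\sgn f'=\sgn(\kappa_2-\kappa_1)=\sgn Y$. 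This already matches Lemma~\ref{lem:axis-contact} with $f=h_1-h_2$ and $g=Y$, with no sign adjustment needed.

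If this derivative identity holds, all hypotheses of Lemma~\ref{lem:axis-contact} are verified, and $Y$ has at most one zero. Theorem~\ref{thm:uniform-count} then bounds the number of solutions by three. The remaining checks are that $f$ and $Y$ are $C^1$ on the open interval $K_1\cap K_2$, which follows from analyticity, and the derivation of $h_s$. The latter step is where I expect the difficulty: confirming $u_s=\ell/\kappa$ from $s_u=-P''=\kappa/\ell$, and confirming $\ell_u=(1+s)/2$. The role of $\nu_1\ne\nu_2$ is confined to the non-simultaneous-vanishing step. Interestingly, the invariant $\mathcal V$ enters only there.
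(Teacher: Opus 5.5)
Your proposal is correct and follows essentially the same route as the paper. Both apply Lemma~\ref{lem:axis-contact} to $f=h_1-h_2$ and $g=Y$. Non-simultaneous vanishing comes from the invariant $\mathcal V$ of Lemma~\ref{lem:shape-recovery}. The relation $\sgn Y'=\sgn f$ comes from \eqref{eq:contact-derivative} and \eqref{eq:J-h-large} using $s>1$. The relation $\sgn f'=\sgn Y$ comes from $h_s=\bigl(s-1-(s+1)h\bigr)/(2\kappa)$; the paper gets this by differentiating $h=-m/\ell$, which is equivalent to your derivation via $u=\ell(1-h)$, $u_s=\ell/\kappa$, $\ell_u=(1+s)/2$, and those identities all check out. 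As in the paper, the only implicit ingredient is the subsection's standing assumption $T_1,T_2>0$, $a_1,a_2>0$, which is what gives \eqref{eq:positive-height}.
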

\begin{proof}
Set $f=h_1-h_2$. By Lemma~\ref{lem:shape-recovery},
\[
  \mathcal V(h_1(s),s,\kappa_1(s))-
  \mathcal V(h_2(s),s,\kappa_2(s))
  =\nu_1-\nu_2\ne0,\qquad s\in K_1\cap K_2.
\]
Hence $f$ and $Y$ have no common zero.

Differentiating $h=-m/\ell$ with respect to $s$ gives
\begin{equation}\label{eq:h-s}
  h_s=\frac{s-1-(s+1)h}{2\kappa}.
\end{equation}
At a zero of $f$, put $h=h_1=h_2$. Then
\[
  f'=\frac{s-1-(s+1)h}{2\kappa_1\kappa_2}Y,
\]
and~\eqref{eq:positive-height} gives $\sgn f'=\sgn Y$. At a zero of $Y$, equations~\eqref{eq:contact-derivative} and~\eqref{eq:J-h-large} give $\sgn Y'=\sgn f$. Lemma~\ref{lem:axis-contact} bounds the number of zeros of $Y$ by one, and Theorem~\ref{thm:uniform-count} gives the conclusion.
\end{proof}

\subsection{The case \texorpdfstring{$a_1,a_2<0,\nu_1\neq \nu_2$}{a1, a2 < 0,nu1!=nu2}}\label{sec:negative}
Proposition~\ref{prop:half-map-domain} gives
\[
  D_i>T_i^2/4,\qquad I_i=(0,\infty),\qquad i=1,2.
\]

\begin{lemma}\label{lem:negative-shape-order}
Assume $T_i>0$ and $a_i<0$ for $i=1,2$. For every $s\in K_1\cap K_2$,
\begin{equation}\label{eq:negative-contact-height}
  \kappa_1(s)=\kappa_2(s),\quad\nu_1>\nu_2
  \quad\Longrightarrow\quad h_1(s)<h_2(s).
\end{equation}
\end{lemma}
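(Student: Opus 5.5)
Fix $s\in K_1\cap K_2$ with $\kappa_1(s)=\kappa_2(s)=:\kappa$. The plan is to regard $h\mapsto\mathcal V(h,s,\kappa)$, for this fixed pair $(s,\kappa)$, as a function of $h$ alone on an admissible subinterval of $(0,1)$. Lemma~\ref{lem:shape-recovery} then gives $\mathcal V(h_i(s),s,\kappa)=\nu_i$ for $i=1,2$. Two facts would give~\eqref{eq:negative-contact-height}: first, that both $h_1(s)$ and $h_2(s)$ lie in a common interval on which $h\mapsto\mathcal V(h,s,\kappa)$ is strictly decreasing; second, that $\nu_1>\nu_2$ on that interval forces $h_1<h_2$.

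\textbf{Step 1: locate the admissible $h$.} Use the reconstruction formulas~\eqref{eq:reconstruction} with $a<0$ and $T>0$. Then $B=-aT\ell>0$, $C=a^2>0$, and $A=D\ell^2>T^2\ell^2/4>0$; moreover $AC/B^2=\nu>1/4$. Since $W(u)>0$, the sign conditions read
\[
2hs(s+1)-\kappa(1-h^2)>0
\qquad\text{and}\qquad
s\bigl((s+1)h-(s-1)\bigr)-\kappa(1-h^2)>0 ,
\]
the first from $A>0$ and the second from $B>0$. The second inequality contains the computation in the proof of Lemma~\ref{lem:mixed-heights}, which gave $h>(s-1)/(s+1)$, but it is stronger. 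So for fixed $(s,\kappa)$ both $h_i$ lie in the region
\[
R_{s,\kappa}:=\bigl\{h\in(0,1):\ s\bigl((s+1)h-(s-1)\bigr)>\kappa(1-h^2)\bigr\}.
\]
The left side of this inequality minus the right side is increasing in $h$ (derivative $s(s+1)+2\kappa h>0$), so $R_{s,\kappa}$ is an interval $(h_*,1)$. On $R_{s,\kappa}$ the first sign condition holds automatically, since $2hs(s+1)>s((s+1)h-(s-1))$ when $s>0$ and $h<1$ (a case check on $s\gtrless1$). Hence $\mathcal V$ is smooth and positive on $R_{s,\kappa}$.

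\textbf{Step 2: monotonicity.} Write $\mathcal V=\kappa\,N(h)/\bigl(4h^2E(h)^2\bigr)$, where
\[
N(h)=(1-h^2)^2\bigl[2hs(s+1)-\kappa(1-h^2)\bigr],
\qquad
E(h)=s\bigl((s+1)h-(s-1)\bigr)-\kappa(1-h^2).
\]
Then compute $\partial_h\log\mathcal V=N'/N-2/h-2E'/E$. The goal is to show this is negative on $(h_*,1)$. Heuristically, $\mathcal V\to0$ as $h\to1$ because of the factor $(1-h^2)^2$, and $\mathcal V\to+\infty$ as $h\downarrow h_*$ because $E\to0$. A single monotone decrease between these limits is exactly what is expected.

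To prove it, I would clear denominators and write $\partial_h\log\mathcal V<0$ as the negativity of a polynomial in $h$ with coefficients in $s,\kappa$. I would then substitute $\kappa=\lambda E$-type variables, or better $h=h_*+t$, and try to show all coefficients are negative. This sign analysis is the main obstacle.

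If a direct sign argument fails, the fallback is to use only the value-level information. For each $\nu>1/4$, the equation $\mathcal V(h,s,\kappa)=\nu$ is a polynomial equation in $h$. I would show it has exactly one root in $R_{s,\kappa}$, for instance by Descartes' rule after the shift $h=h_*+t$. Combined with the limits $+\infty$ and $0$ at the two endpoints, uniqueness of the root gives the strict decrease.

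\textbf{Step 3: conclude.} Since $\mathcal V(\cdot,s,\kappa)$ is strictly decreasing on the interval $R_{s,\kappa}$, which contains both $h_1(s)$ and $h_2(s)$, the inequality $\nu_1=\mathcal V(h_1)>\mathcal V(h_2)=\nu_2$ yields $h_1(s)<h_2(s)$.
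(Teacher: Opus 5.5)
Your overall strategy is the same as the paper's: show that $h\mapsto\mathcal V(h,s,\kappa)$ is strictly decreasing on an interval containing $h_1(s)$ and $h_2(s)$. The paper works in $r=(1-h)/(1+h)$, where your $E(h)$ equals $2\mathcal N_{s,\kappa}(r)/(1+r)^2$. However, the interval you choose is too large, and the monotonicity you aim to prove on it is false.

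Your Step~1 claim that $A>0$ is automatic on $R_{s,\kappa}$ fails for $s<1$. Indeed, $2hs(s+1)-s\bigl((s+1)h-(s-1)\bigr)=s\bigl((s+1)h+s-1\bigr)$, which is negative for $h<(1-s)/(1+s)$. Concretely, take $(s,\kappa)=(1/2,1/10)$. Then $E(0)=0.15>0$ and $E$ is increasing, so $R_{s,\kappa}=(0,1)$. Yet $\mathcal V\to-\infty$ as $h\downarrow0$, while $\mathcal V(1/9)\approx2.44$ and $\mathcal V(3/7)\approx0.21$. So $\mathcal V$ is not monotone on $R_{s,\kappa}$, your heuristic $\mathcal V\to+\infty$ at $h_*$ fails, and a value such as $\nu=1>1/4$ is attained twice. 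Your planned polynomial sign argument with free coefficients $s,\kappa>0$ therefore cannot succeed, and neither can the Descartes fallback.

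The missing ingredient is the constraint $h_i(s)>(1-s)/(1+s)$, i.e.\ $r_i<s$. It follows from
\[
s=-P'(u)=\frac{u}{-P(u)}e^{T\tau}>\frac{u}{-P(u)}=r,
\]
because $T>0$. Together with $r_i<1$ from the energy identity, the paper restricts to $0<r<\min\{1,s\}$ with $\mathcal N_{s,\kappa}(r)>0$. On that set, substitute $\kappa=s(1+r)(1-sr)/(2r)-L$ with $L=\mathcal N_{s,\kappa}(r)/(2r)>0$. The numerator of $\mathcal R_{s,\kappa}'$ then becomes the sum $\mathcal D$ of manifestly positive terms, and its factors $(s-r)$ are exactly where $r<s$ is needed.

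Alternatively, you could shrink your interval by first proving $\kappa(s)>k_0(s)$ for $s<1$, as in Lemma~\ref{lem:focus-barrier}, whose proof does not use this lemma. Since $E\bigl((1-s)/(1+s)\bigr)=\frac{4s}{(1+s)^2}\bigl(k_0(s)-\kappa\bigr)$, this forces $h_*>(1-s)/(1+s)$. Nothing in your proposal supplies either constraint.

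Even with the correct interval, your Step~2 remains only a plan. The actual sign computation, which is the heart of the lemma, is not carried out.
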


\begin{proof}
Fix $s\in K_1\cap K_2$ with $\kappa_1(s)=\kappa_2(s)=\kappa$ and put
\[
  r_i=\frac{1-h_i(s)}{1+h_i(s)}=\frac{u_i}{-P_i(u_i)},\qquad i=1,2.
\]
Equations~\eqref{eq:Pprime-liouville} and~\eqref{eq:energy} give $0<r_i<\min\{1,s\}$. Define
\[
  \mathcal N_{s,\kappa}(r):=s(1+r)(1-sr)-2\kappa r.
\]
Substitution into~\eqref{eq:reconstruction} gives
\[
  \frac{\mathcal N_{s,\kappa}(r_i)}{2sr_i(1+r_i)}
  =-\frac{a_{i}T_i\ell_i}{W_i(u_i)}>0.
\]
By Lemma~\ref{lem:shape-recovery}, $\nu_i=\mathcal R_{s,\kappa}(r_i)$, where
\begin{equation}\label{eq:ratio-shape}
  \mathcal R_{s,\kappa}(r):=\mathcal V\left(\frac{1-r}{1+r},s,\kappa\right)
  =\frac{2\kappa r^2\bigl[s(s+1)(1-r^2)-2\kappa r\bigr]}
  {(1-r)^2\mathcal N_{s,\kappa}(r)^2}.
\end{equation}

Put $\rho=\max\{r_1,r_2\}$. For $r\in[\min\{r_1,r_2\},\rho]$,
\begin{equation}\label{eq:ratio-prefix}
  \rho\mathcal N_{s,\kappa}(r)
  =r\mathcal N_{s,\kappa}(\rho)+s(\rho-r)+s^2\rho r(\rho-r)>0.
\end{equation}
Set $L=\mathcal N_{s,\kappa}(r)/(2r)>0$. Differentiating at fixed $s,\kappa$ and substituting $\kappa=s(1+r)(1-sr)/(2r)-L$ gives
\begin{align}
  \mathcal R_{s,\kappa}'(r)
  &=\frac{4\kappa r(1+r)\mathcal D(s,\kappa,r)}
  {(1-r)^3\mathcal N_{s,\kappa}(r)^3},\label{eq:ratio-derivative}\\
  \mathcal D(s,\kappa,r)
  &=s^2(1-r)(s-r)(1+sr^2)\nonumber\\
  &\quad+rs\bigl((3r-r^2)(s-r)+(1-r)^3\bigr)L+2r^2L^2.\label{eq:ratio-positive}
\end{align}
Every term in $\mathcal D$ is positive, so $\mathcal R_{s,\kappa}'>0$ between $r_1$ and $r_2$. Hence
\[
  \nu_1>\nu_2\quad\Longrightarrow\quad r_1>r_2
  \quad\Longrightarrow\quad h_1<h_2.
\]
\end{proof}

For $0<s\le1$, set
\begin{equation}\label{eq:barrier-functions}
  h_-(s):=\frac{1-s}{1+s},\qquad k_0(s):=\frac{(1-s)(1+s)^2}{2}.
\end{equation}
\begin{lemma}\label{lem:focus-barrier}
For a half-return map with $T>0$, $a<0$, and nonempty domain, every value $s=-P'(u)<1$ satisfies
\[
  h(s)>h_-(s),\qquad \kappa(s)>k_0(s).
\]
\end{lemma}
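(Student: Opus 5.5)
The two inequalities will be handled separately: the first comes from Liouville's formula, and the second from the focus condition $D>T^2/4$, written through the reconstruction formulas~\eqref{eq:reconstruction} of Lemma~\ref{lem:shape-recovery}.

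\emph{Height bound.} Set $r=(1-h)/(1+h)=u/(-P(u))\in(0,1)$. Since $h\mapsto(1-h)/(1+h)$ is decreasing, the inequality $h>h_-(s)=(1-s)/(1+s)$ is equivalent to $r<s$. By~\eqref{eq:Pprime-liouville},
\[
  s=-P'(u)=\frac{u}{-v}\,\mathrm{e}^{T\tau}=r\,\mathrm{e}^{T\tau}.
\]
Because $T>0$ and $\tau>0$, this gives $s>r$, and hence $h(s)>h_-(s)$. This argument uses neither $s<1$ nor the sign of $a$; the hypothesis $s<1$ only ensures that $h_-(s)>0$, so the bound is not trivial.

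\emph{Curvature bound.} In the notation of Lemma~\ref{lem:shape-recovery} we have $A=D\ell^2$, $B=-aT\ell$ and $C=a^2$. Since $a<0$ and $T>0$, $B>0$. Proposition~\ref{prop:half-map-domain} gives $D>T^2/4$, so $4AC-B^2>0$. I will insert~\eqref{eq:reconstruction} and divide by $W(u)^2>0$. Writing
\[
  \gamma:=\frac{(s+1)h-(s-1)}{2(1-h)},
\]
the condition becomes
\[
  Q_{h,s}(\kappa):=\frac{\kappa(1-h)(1+h)^2}{hs}\Bigl(\frac{1+s}{2(1-h)}-\frac{\kappa(1+h)}{4hs}\Bigr)
  -\Bigl(\gamma-\frac{\kappa(1+h)}{2s}\Bigr)^2>0 .
\]
This is a quadratic in $\kappa$. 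At $\kappa=0$ it equals $-\gamma^2<0$; note that $\gamma>0$ for $s<1$. Hence $Q_{h,s}(\kappa)>0$ forces $\kappa$ to exceed the smaller positive root $\kappa_*(h,s)$. The bound $B>0$, namely $\kappa<2s\gamma/(1+h)$, selects this root.

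It then remains to show that $\kappa_*(h,s)\ge k_0(s)$ for all $h\in(h_-(s),1)$. I will first check the endpoint $h=h_-(s)$. There $1-h=2s/(1+s)$, $1+h=2/(1+s)$ and $\gamma=(1-s^2)/(2s)$, and I expect $\kappa_*(h_-,s)=(1-s)(1+s)^2/2=k_0(s)$ exactly. Next I will show that $\kappa_*$ is nondecreasing in $h$ on this range. The plan is to differentiate the relation $Q_{h,s}(\kappa_*)=0$ implicitly, so that $\partial_h\kappa_*=-\partial_hQ/\partial_\kappa Q$. On the lower root $\partial_\kappa Q>0$, so it suffices to prove $\partial_hQ\le0$ at $\kappa=\kappa_*$. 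Alternatively, I could evaluate $Q_{h,s}(k_0(s))$ and show it is $\le0$ for $h>h_-$ with $\kappa=k_0<$ the $B$-threshold. This directly gives $\kappa>k_0$, because then $\kappa\le k_0$ would contradict $Q>0$. I would try this second route first, since it is a single polynomial sign check in $(h,s)$.

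The main obstacle is that final polynomial inequality. Rewriting it in the variable $r\in(0,s)$, using~\eqref{eq:ratio-shape}, should make it more manageable. The inequality $\mathcal R_{s,\kappa}(r)>1/4$ together with the positivity of $\mathcal N_{s,\kappa}(r)$ then needs to be shown to imply $\kappa>k_0(s)$. After clearing denominators, I would factor out $(s-r)$ and check that the remaining cofactor has one sign for $0<r<s<1$.
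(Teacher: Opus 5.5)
\textbf{There is a genuine gap in the curvature bound.} Your height bound is correct and is the paper's argument ($s=r\,\mathrm{e}^{T\tau}>r$). The curvature bound, however, cannot be obtained as you propose. The pointwise constraints you impose ($4AC>B^2$, $B>0$, $h>h_-(s)$, $s<1$) do \emph{not} imply $\kappa>k_0(s)$, so both of your routes fail.

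With $r=(1-h)/(1+h)$, one has
\[
4r^2s^2(1-r^2)^2\,Q_{h,s}(\kappa)=8\kappa s(1+s)r^2(1-r^2)-16\kappa^2r^3-(1-r)^2\bigl(s(1+r)(1-sr)-2\kappa r\bigr)^2 .
\]
At the endpoint $r=s$ (i.e.\ $h=h_-(s)$), with $c=(1-s)(1+s)^2$, this becomes $s^2(c-2\kappa)\bigl(2(1+s)^2\kappa-(1-s)^2c\bigr)$. Its roots are $(1-s)^3/2$ and $k_0(s)=c/2$, and the latter is exactly the $B=0$ threshold there. So $\kappa_*(h_-(s),s)=(1-s)^3/2<k_0(s)$, not $k_0(s)$ as you expected. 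The admissible set at the endpoint is therefore $(1-s)^3/2<\kappa<k_0(s)$, the opposite of the desired inequality.

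By continuity this persists for $h$ slightly above $h_-(s)$. For instance, take $s=1/2$, $r=0.49$ (so $h\approx0.342>1/3$) and $\kappa=0.3$. Then $B>0$ and $AC/B^2\approx2.12>1/4$, yet $\kappa<k_0(1/2)=9/16$. Moreover $Q_{h,s}(k_0(s))>0$ at this $(h,s)$, so the sign check of your second route fails too. No factorization by $(s-r)$ can rescue it, because the implication you want is false on the region you describe.

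\textbf{What is missing.} The jet $(h,s,\kappa)$ of an actual half-map satisfies one more, transcendental, relation that your constraints ignore: $\rho=s/r=\mathrm{e}^{T\tau}$ with $\tau>\pi/\omega$, i.e.\ $\rho>\exp\bigl(\pi/\sqrt{\nu-1/4}\bigr)$ with $\nu=AC/B^2$. In the example above $\rho\approx1.02$, whereas $\nu\approx2.12$ would force $\rho>9.9$. So that point is not realized by any half-map, but excluding it requires dynamical information.

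\textbf{How the paper supplies it.} The paper does not use a pointwise inequality; it uses a continuation argument along the curve $s\mapsto\kappa(s)$:
\begin{enumerate}
\item It proves $[s_0,1]\subset K$, using $\tau(u)\to\pi/\omega$ and $-P'(u)\to\mathrm{e}^{T\pi/(2\omega)}>1$ as $u\to\infty$.
\item It notes $Z(1)=\kappa(1)>0$, because $k_0(1)=0$.
\item It uses $\kappa_s=\frac{1+s}{2}-\mathcal J(h,s,\kappa)/\kappa$ together with $h>h_-(s)$ to get $Z'<0$ at every zero of $Z=\kappa-k_0$ on $[s_0,1)$; see \eqref{eq:barrier-contact}.
\item Lemma~\ref{lem:downcrossing} then yields $Z(s_0)>0$.
\end{enumerate}
To close the gap you need either this kind of global barrier argument or to build the constraint $\rho>\exp\bigl(\pi/\sqrt{\nu-1/4}\bigr)$ into your pointwise analysis.
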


\begin{proof}
Set $Z(s):=\kappa(s)-k_0(s)$. By~\eqref{eq:Pprime-liouville},
\[
  s=\frac{u}{-P(u)}\ee^{T\tau(u)}
  >\frac{u}{-P(u)}=\frac{1-h}{1+h},
\]
which gives $h>h_-(s)$.

Fix $s_0\in-P'(I)$ with $s_0<1$. We first prove that $[s_0,1]\subset-P'(I)$ and $Z(1)>0$.
Put $p=\pi/\omega$, where $\omega=\sqrt{D-T^2/4}$. Since $S,B>0$ on $(0,p)$ and $B(p)>0$, equation~\eqref{eq:sub-sol} gives $\tau(u)>p$. For any $\varepsilon\in(0,p)$, $S(p+\varepsilon)<0$, so $\xi(p+\varepsilon;u)>0$ for sufficiently large $u$. Hence $p<\tau(u)<p+\varepsilon$ for such $u$, and therefore $\tau(u)\to p$. Equation~\eqref{eq:uv-linear} then yields $P(u)/u\to S'(p)=-\ee^{Tp/2}$, so
\begin{equation}\label{eq:negative-slope-limit}
  \lim_{u\to\infty}-P'(u)
  =\lim_{u\to\infty}\frac{u}{-P(u)}\ee^{T\tau(u)}
  =\ee^{Tp/2}>1.
\end{equation}
The continuity of $-P'$ gives $[s_0,1]\subset-P'(I)$. Thus $Z$ is defined on $[s_0,1]$, with $Z(1)=\kappa(1)>0$.

By~\eqref{eq:kappa-s},
\[
  \kappa_s=G(s,h,\kappa):=\frac{1+s}{2}
  -\frac{\mathcal J(h,s,\kappa)}{\kappa}.
\]
At any zero $s\in[s_0,1)$ of $Z$, we have $\kappa(s)=k_0(s)$. Writing $h=h(s)>h_-(s)$ and substituting~\eqref{eq:J}, we obtain 
\begin{equation}\label{eq:barrier-contact}
\begin{aligned}
  Z'(s)&=G(s,h,k_0(s))-k_0'(s)\\
  &=-\frac{(1+s)}{4hs(1-h^2)}\left(h-\frac{1-s}{1+s}\right)
  \bigl((1+s^2)(3h^2+1)+4h(1-s^2)\bigr)<0.
\end{aligned}
\end{equation}
Since $Z(1)>0$, Lemma~\ref{lem:downcrossing} implies $Z(s_0)>0$. The choice of $s_0$ was arbitrary, proving the second inequality.
\end{proof}

\begin{proposition}\label{prop:negative}
If $T_i>0$, $a_i<0$, and $\nu_1\ne\nu_2$, then~\eqref{eq:chord-matching} has at most three distinct solutions.
\end{proposition}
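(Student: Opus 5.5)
By Theorem~\ref{thm:uniform-count}, it suffices to show that $Y=\kappa_2-\kappa_1$ has at most one zero on $K_1\cap K_2$. Without loss of generality we relabel so that $\nu_1>\nu_2$. The plan mirrors Proposition~\ref{prop:positive}, but with a different sign mechanism. There the control came from the inequality $h_i<(s-1)/(s+1)$; here Lemma~\ref{lem:negative-shape-order} and Lemma~\ref{lem:focus-barrier} take its place.

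Step one: show that $Y'$ has a constant sign at every zero of $Y$, and then apply Lemma~\ref{lem:downcrossing}. At a zero $s$ of $Y$, put $\kappa=\kappa_1(s)=\kappa_2(s)$. Lemma~\ref{lem:negative-shape-order} gives $h_1(s)<h_2(s)$. By \eqref{eq:contact-derivative},
\[
  Y'(s)=\frac{\mathcal J(h_1,s,\kappa)-\mathcal J(h_2,s,\kappa)}{\kappa},
\]
so it is enough to know that $\mathcal J(\cdot,s,\kappa)$ is strictly increasing on $[h_1,h_2]$. That would give $Y'(s)<0$ at every zero, and Lemma~\ref{lem:downcrossing} then finishes the proof.

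By \eqref{eq:J-h-large}, $\partial_h\mathcal J$ has the sign of
\[
  \kappa+\frac{s\bigl[(s-1)(1-h)^3(3h+1)+16h^3\bigr]}{(1-h^2)^2}.
\]
This is obviously positive when $s\ge1$. The real work is therefore the range $s<1$, which now occurs: by \eqref{eq:negative-slope-limit}, $K$ contains values below and above $1$, and in fact the negative case allows $s$ arbitrarily close to $0$.

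Step two: handle $s<1$ with the barriers of Lemma~\ref{lem:focus-barrier}. For every $h\in[h_1,h_2]$ we have $h\ge h_1>h_-(s)=(1-s)/(1+s)$, and $\kappa=\kappa_1(s)>k_0(s)=(1-s)(1+s)^2/2$. It then suffices to prove the elementary inequality
\[
  k_0(s)(1-h^2)^2+s\bigl[16h^3-(1-s)(1-h)^3(3h+1)\bigr]\ge0
  \qquad\text{for } 0<s<1,\ h\ge\tfrac{1-s}{1+s}.
\]
To prove it, substitute $h=(1-s)/(1+s)+t$ with $t\ge0$, or set $s=(1-h_0)/(1+h_0)$ with $h\ge h_0$, and clear denominators. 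The aim is a polynomial with nonnegative coefficients.

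The quick check $t=0$ is encouraging. Then $1-h=2s/(1+s)$ and $1-h^2=4s/(1+s)^2$, so the negative term becomes $s(1-s)\cdot 8s^3(3h+1)/(1+s)^3$. This has to be dominated by $k_0(1-h^2)^2=8s^2(1-s)/(1+s)^2$ together with $16sh^3$. Dividing by $8s^2(1-s)/(1+s)^2$, the condition reduces to
\[
  1-\frac{s^2(3h+1)}{1+s}\ge0,
\]
and since $3h+1=(4-2s)/(1+s)$ this is $(1+s)^2\ge s^2(4-2s)$, which holds for $0<s<1$.

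For $t>0$, both $16h^3$ and $-(1-h)^3(3h+1)$ increase in $h$: the derivative of the second is $12h(1-h)^2\ge0$. The only decreasing term is $k_0(1-h^2)^2$, so that is where a bound is still needed.

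Step three, the main obstacle: make the monotonicity in $h$ rigorous. If the crude bound fails for $h$ near $1$, where $(1-h^2)^2\to0$, note that the term $16h^3$ alone already dominates there, because $(1-h)^3(3h+1)\to0$. So I would split at a threshold, say $h\ge1/2$ versus $h<1/2$, and verify each piece by the polynomial positivity check above.

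With $Y'<0$ at every zero of $Y$, $Y$ has at most one zero, and Theorem~\ref{thm:uniform-count} gives at most three solutions.
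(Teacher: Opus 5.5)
Your argument follows the paper's proof step for step. At a zero of $Y$ you use Lemma~\ref{lem:negative-shape-order} to get $h_1<h_2$, and reduce to $\partial_h\mathcal J>0$ on $[h_1,h_2]$. You settle $s\ge1$ by \eqref{eq:J-h-large}. For $s<1$ you use $\kappa>k_0(s)$ to reduce to $k_0(s)\ge\Theta(s,h)$, with $\Theta$ as in \eqref{eq:Theta}. The gap is that this inequality, the only new analytic content of the proposition, is never proved. You verify it only at the endpoint $h=h_-(s)$, which is the analogue of \eqref{eq:Theta-gap}. You then correctly notice that your cleared-denominator expression is not monotone in $h$, but Step three gives only a plan to split at a threshold, with no estimates. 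An endpoint check plus a plan does not establish the inequality on all of $[h_1,h_2]$.

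The missing idea is not to clear the factor $(1-h^2)^2$. The paper keeps it and shows in \eqref{eq:Theta-derivative} that $\partial_h\Theta(s,h)<0$ for $0<s<1$. Hence $\Theta(s,h)<\Theta(s,h_-(s))<k_0(s)<\kappa$, and your endpoint computation then suffices. A more direct route uses the identity $(1-h)^3(3h+1)=(1-h^2)^2-4h^2(1-h)^2$, which rewrites your target expression as
\[
  \tfrac12(1-s)(1+s^2)(1-h^2)^2+4s(1-s)h^2(1-h)^2+16sh^3>0 .
\]
This holds for all $0<s<1$ and $0<h<1$, so at this step only $\kappa>k_0(s)$ is needed, not $h>h_-(s)$. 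Your proposed split can also be completed:
\begin{itemize}
\item For $h\ge\tfrac12$: $16h^3\ge2>\tfrac5{16}\ge(1-h)^3(3h+1)$.
\item For $h<\tfrac12$: $k_0(s)(1-h^2)^2>\tfrac9{32}(1-s)(1+s)^2>s(1-s)\ge s(1-s)(1-h)^3(3h+1)$, because $9s^2-14s+9>0$.
\end{itemize}
Some such estimate has to be written down for the proof to be complete.
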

\begin{proof}
Assume $\nu_1>\nu_2$. At a zero of $Y$, put $\kappa=\kappa_1(s)=\kappa_2(s)$. Lemma~\ref{lem:negative-shape-order} gives $h_1(s)<h_2(s)$. We show that $\partial_h\mathcal J(h,s,\kappa)>0$ on $[h_1,h_2]$.

For $s\ge1$, this follows from~\eqref{eq:J-h-large}. For $0<s<1$, set
\begin{equation}\label{eq:Theta}
  \Theta(s,h):=\frac{s\bigl((1-s)(1-h)^3(3h+1)-16h^3\bigr)}{(1-h^2)^2},
\end{equation}
so that
\begin{equation}\label{eq:J-h}
  \partial_h\mathcal J(h,s,\kappa)=\frac{\kappa(\kappa-\Theta(s,h))}{2h^2s}.
\end{equation}
For $0<s<1$ and $0<h<1$,
\begin{align}
  \partial_h\Theta(s,h)
  &=\frac{8hs\bigl((1+h)^3-s(1-h)^3\bigr)}{(h^2-1)^3}<0,\label{eq:Theta-derivative}\\
  k_0(s)-\Theta(s,h_-(s))
  &=\frac{(1-s)\bigl(2s^4+(1-s^3)+(1-s)\bigr)}{2s}>0.\label{eq:Theta-gap}
\end{align}
Lemma~\ref{lem:focus-barrier} therefore yields
\[
  \Theta(s,h)<\Theta(s,h_{-}(s))<k_0(s)<\kappa\qquad(h\in[h_1,h_2]),
\]
and hence $\partial_h\mathcal J>0$ also for $s<1$. By~\eqref{eq:contact-derivative}, $Y'<0$ at every zero of $Y$. Lemma~\ref{lem:downcrossing} and Theorem~\ref{thm:uniform-count} give the bound.
\end{proof}

\subsection{The case \texorpdfstring{$a_1a_2>0$ and $\nu_1=\nu_2$}{a1 a2 > 0 and nu1 = nu2}}\label{sec:same-shape}

Assume $T_1,T_2>0$, $a_1a_2>0$, and $\nu_1=\nu_2$. Set
\[
  \sigma_i:=\operatorname{sgn}a_i,\qquad
  \lambda_i:=\frac{|a_i|}{T_i}>0,\qquad i=1,2.
\]
The rescaling
\[
  \widehat t_i:=T_{i}t,\qquad
  \widehat\xi_i(\widehat t_i):=\frac{T_i^2\xi_i(t)}{|a_i|},\qquad
  \widehat\eta_i(\widehat t_i):=\frac{T_i\eta_i(t)}{|a_i|}=\frac{\eta_i(t)}{\lambda_i},
\]
transforms~\eqref{eq:normal-pair} into
\begin{equation}\label{eq:common-shape-normal}
  \frac{\mathrm{d}\widehat\xi_i}{\mathrm{d}\widehat t_i}=\widehat\xi_i-\widehat\eta_i,\qquad
  \frac{\mathrm{d}\widehat\eta_i}{\mathrm{d}\widehat t_i}=\nu_i\widehat\xi_i-\sigma_i,\qquad \widehat\xi_i<0.
\end{equation}
Denoting by $\widehat P_i$ the half-return map of~\eqref{eq:common-shape-normal}, the relation $\eta_i=\lambda_i\widehat\eta_i$ yields
\begin{equation}\label{eq:half-map-scaling}
  P_i(u)=\lambda_i\widehat P_i(u/\lambda_i),\qquad i=1,2.
\end{equation}

\begin{figure}[htbp]
  \centering
  \includegraphics[width=\linewidth]{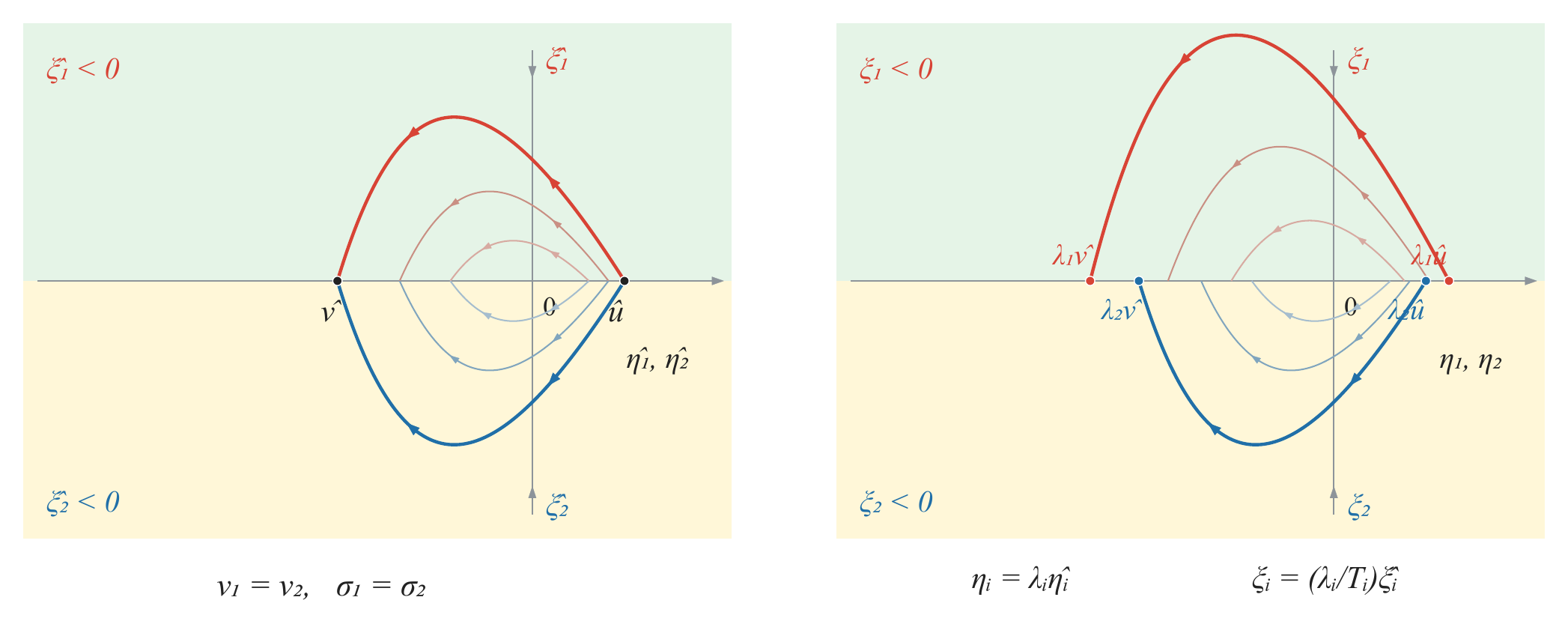}
  \caption{Half-return arcs before (left) and after (right) the coordinate scalings $\eta_i=\lambda_i\widehat\eta_i$ and $\xi_i=(\lambda_i/T_i)\widehat\xi_i$. System $1$ is displayed above the horizontal axis and system $2$ below; the upper copy is reflected for display. The two arcs are compared, not concatenated.}\label{fig:equal-parameter-orbits}
\end{figure}

The rescaled systems coincide because $\sigma_1=\sigma_2$ and $\nu_1=\nu_2$. Write $\widehat P=\widehat P_1=\widehat P_2$ and let $\widehat M$ be its midpoint function. Equation~\eqref{eq:half-map-scaling} gives
\begin{equation}\label{eq:midpoint-scaling}
  M_i(\ell)=\lambda_i\widehat M(\ell/\lambda_i),\qquad \ell\in J_i,\quad i=1,2.
\end{equation}

Differentiating~\eqref{eq:half-map-scaling} shows that, at equal $s$, we have $h_1(s)=h_2(s)$ and $\kappa_1(s)=\kappa_2(s)$. Thus $Y\equiv0$, and Theorem~\ref{thm:uniform-count} does not apply. We instead use~\eqref{eq:midpoint-scaling} to count the matching solutions.

The case $\lambda_1=\lambda_2$ and $\delta=0$ corresponds to the crossing period annuli characterized in~\cite[Theorem~1]{CarmonaEtAl2023Annuli}.

\begin{proposition}\label{prop:same-shape}
Assume $T_1,T_2>0$, $a_1a_2>0$, and $\nu_1=\nu_2$. Then~\eqref{eq:chord-matching} has at most one solution in $J_1\cap J_2$ if $\lambda_1\ne\lambda_2$. If $\lambda_1=\lambda_2$, then~\eqref{eq:chord-matching} has no solution for $\delta\ne0$ and holds for every $\ell\in J_1\cap J_2$ for $\delta=0$ (in which case no periodic orbit is isolated). Consequently,~\eqref{eq:raw-system} has at most one crossing limit cycle.
\end{proposition}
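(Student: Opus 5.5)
We work entirely with the scaling relation \eqref{eq:midpoint-scaling}: both midpoint functions are rescalings of the single function $\widehat M$. Write $g(x):=\widehat M(x)$ on $\widehat J:=\widehat\ell(\widehat I)$; then $J_i=\lambda_i\widehat J$ and $\mathcal B(\ell)=\lambda_1 g(\ell/\lambda_1)-\lambda_2 g(\ell/\lambda_2)$.

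First we record the properties of $g$ we need. By \eqref{eq:Mprime}, $g'=(1-\widehat s)/(1+\widehat s)$. By \eqref{eq:concavity} with $T=1$, $a=\sigma\ne0$, we have $\widehat P''<0$, so $g''<0$. Thus $g$ is strictly concave on $\widehat J$. We also need its behaviour at the left end of $\widehat J$. If $\sigma=+1$, then $\widehat P(0)=0$, so $\widehat J=(0,\widehat\ell_*)$ and $g(x)\to0$ as $x\downarrow0$. If $\sigma=-1$, we use the endpoint values from Proposition~\ref{prop:half-map-domain}(4a): the left endpoint of $\widehat J$ is $\ell_b=-v_b/2>0$, and $g\to v_b/2=-\ell_b$ there. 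In both cases the key quantity is the function $\phi(x):=g(x)/x$, and the matching function becomes
\[
\mathcal B(\ell)=\ell\bigl(\phi(\ell/\lambda_1)-\phi(\ell/\lambda_2)\bigr).
\]

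\textbf{Case $\lambda_1=\lambda_2$.} Here $\mathcal B\equiv0$. Hence \eqref{eq:chord-matching} has no solution for $\delta\ne0$ and holds identically for $\delta=0$. In the latter case none of the solutions is isolated, so there is no limit cycle.

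\textbf{Case $\lambda_1\ne\lambda_2$.} Say $\lambda_1<\lambda_2$, so $\ell/\lambda_1>\ell/\lambda_2$. The plan is to show that $\mathcal B$ is strictly monotone on $J_1\cap J_2$, which gives at most one solution. Compute
\[
\mathcal B'(\ell)=g'(\ell/\lambda_1)-g'(\ell/\lambda_2).
\]
Since $g'$ is strictly decreasing by concavity, $\mathcal B'(\ell)<0$ wherever both arguments lie in $\widehat J$. This holds throughout $J_1\cap J_2$, which is an interval because both $J_i$ are intervals. So $\mathcal B$ is strictly decreasing and $\mathcal B=\delta$ has at most one solution.

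The limit-cycle count then follows from the bijection of Subsection~\ref{sec:geometry}, together with the reduction \eqref{eq:time-reverse-map} to $T_i>0$. That reduction preserves the rescaling structure, since $\nu_i$ is unchanged under $T\mapsto -T$ and $a_i\mapsto a_i$.

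The step to double-check is the claim that $\ell\in J_1\cap J_2$ really means $\ell/\lambda_i\in\widehat J$ for both $i$. This follows directly from \eqref{eq:half-map-scaling}, because $\ell_i(u)=\lambda_i\widehat\ell(u/\lambda_i)$. With that in place, concavity alone suffices and no endpoint asymptotics are actually needed.
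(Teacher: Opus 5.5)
Your proof is correct and follows the paper's route. Concavity of $\widehat M$, obtained from \eqref{eq:concavity} and \eqref{eq:Mprime}, combined with the scaling \eqref{eq:midpoint-scaling} makes $\mathcal B'(\ell)=\widehat M'(\ell/\lambda_1)-\widehat M'(\ell/\lambda_2)$ have the constant sign $\sgn(\lambda_1-\lambda_2)$, and $\lambda_1=\lambda_2$ gives $M_1=M_2$ (as you note yourself, the function $\phi$ and the left-endpoint asymptotics are unnecessary and can be dropped).
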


\begin{proof}
Equations~\eqref{eq:concavity} and~\eqref{eq:Mprime} give $\widehat M''<0$. If $\lambda_1\ne\lambda_2$, then~\eqref{eq:midpoint-scaling} implies
\[
  \sgn\BB'(\ell)
  =\sgn\bigl(\widehat M'(\ell/\lambda_1)-\widehat M'(\ell/\lambda_2)\bigr)
  =\sgn(\lambda_1-\lambda_2)\ne0.
\]
Hence $\BB(\ell)=\delta$ has at most one solution. If $\lambda_1=\lambda_2$, then $M_1=M_2$, and the equation reduces to $0=\delta$. The assertion about limit cycles follows from Subsection~\ref{sec:geometry}.
\end{proof}

\begin{figure}[H]
  \centering
  \includegraphics[width=\linewidth]{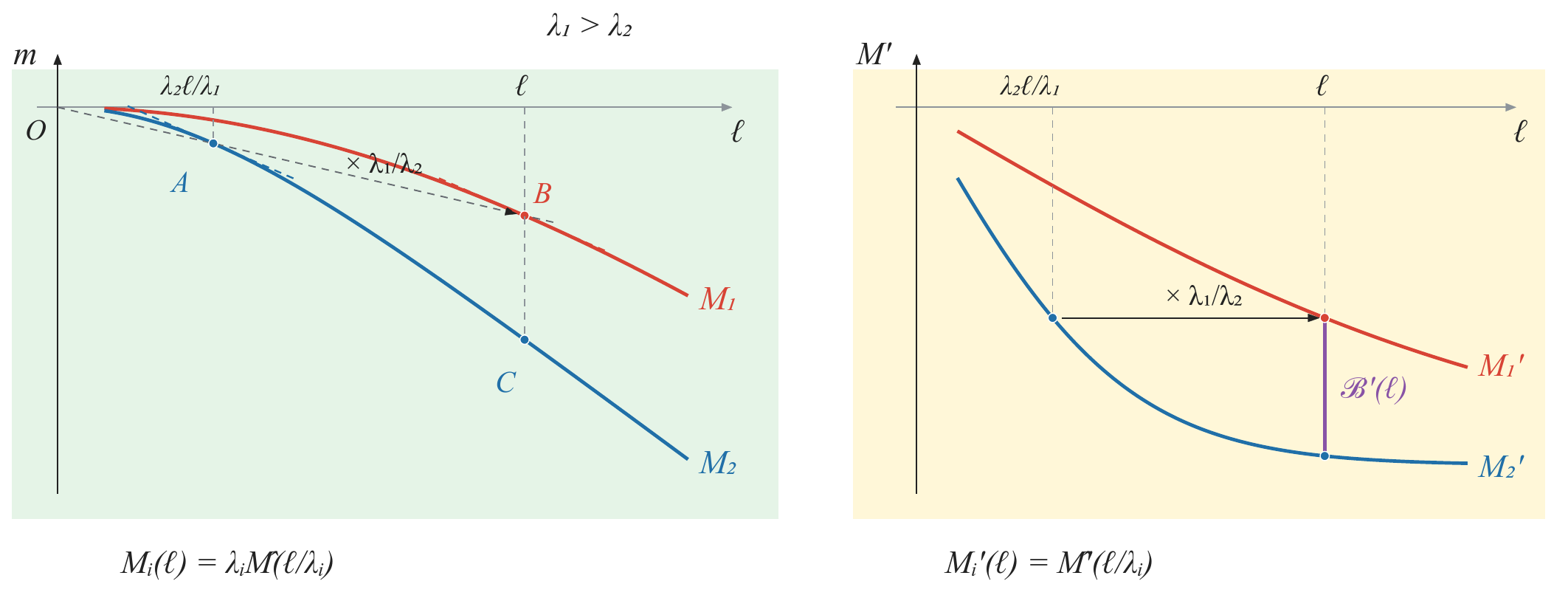}
  \caption{Midpoint functions and their derivatives for $\lambda_1>\lambda_2$. Left: the points $A$ and $B$ are related by a dilation about the origin of factor $\lambda_1/\lambda_2$, and their tangents are parallel. Right: $M_1'(\ell)=M_2'(\lambda_2\ell/\lambda_1)>M_2'(\ell)$, so $\mathcal B'(\ell)>0$.}\label{fig:equal-parameter-midpoints}
\end{figure}

Theorem~\ref{thm:main} now follows from the exhaustive classification in Subsection~\ref{subsec:classification}, Proposition~\ref{prop:concavity-cases}, ~\ref{prop:mixed},~\ref{prop:positive}, ~\ref{prop:negative}, and ~\ref{prop:same-shape}, together with the correspondence in Subsection~\ref{sec:geometry}.

\section*{Acknowledgments}

The second authors are partially supported by National Key R\&D Program of China
[2022YFA1005900], the National Natural Science Foundation of China [12671201,12271354], and Innovation Program of Shanghai Municipal Education Commission [2021-01-07-00-02-E00087]. 

\appendix
\section{Derivation of the third-derivative identity}\label{app:third-derivative}

Fix $u\in I$ and set
\[
\begin{gathered}
  \ell=\frac{u-P(u)}2,\qquad
  m=\frac{u+P(u)}2,\\
  h=-\frac m\ell,\qquad
  s=-P'(u),\qquad
  \kappa=-\ell P''(u).
\end{gathered}
\]
Holding $m,\ell$ fixed, define
\[
  \widetilde P(\xi)=\frac{P(m+\ell\xi)-m}{\ell}.
\]
Then
\[
  \widetilde P(1)=-1,\qquad
  \widetilde P'(1)=-s,\qquad
  \widetilde P''(1)=-\kappa,\qquad
  \widetilde P'''(1)=\ell^2P'''(u).
\]

Set
\[
  w(\xi)=\frac{W(\ell(\xi-h))}{W(u)},\qquad
  \rho=w(-1).
\]
Since $w(1)=1$, formula~\eqref{eq:P-derivatives} gives
\[
  s=\frac{1-h}{1+h}\rho.
\]
Thus, for some coefficient $A$,
\begin{equation}\label{eq:normalized-W}
  \rho=s\frac{1+h}{1-h},\qquad
  w(\xi)=A\xi^2+\frac{1-\rho}{2}\xi+\frac{1+\rho}{2}-A.
\end{equation}

Define, near $\xi=\pm1$,
\[
  R(\xi)=\frac{\xi-h}{w(\xi)},\qquad
  L(\xi)=\frac{R'(\xi)}{R(\xi)}
  =\frac1{\xi-h}-\frac{w'(\xi)}{w(\xi)}.
\]
The rational derivative formula becomes
\[
  \widetilde P'(\xi)
  =\frac{R(\xi)}{R(\widetilde P(\xi))}.
\]
Differentiating once and twice, respectively, and evaluating at $\xi=1$ gives
\begin{align}
  \kappa&=s\bigl(L(1)+sL(-1)\bigr),\label{eq:jet-first}\\
  \ell^2P'''(u)
  &=-\frac{\kappa^2}{s}-s\kappa  L(-1)-sL'(1)+s^3L'(-1).
  \label{eq:jet-second}
\end{align}
Here
\[
  \begin{aligned}
    L(1)&=\frac1{1-h}-2A-\frac{1-\rho}{2},\\
    L(-1)&=-\frac1{1+h}
    -\frac{-2A+(1-\rho)/2}{\rho},\\
    L'(\xi)&=-\frac1{(\xi-h)^2}-\frac{2A}{w(\xi)}
    +\left(\frac{w'(\xi)}{w(\xi)}\right)^2.
  \end{aligned}
\]
Solving~\eqref{eq:jet-first} for $A$ yields
\[
  A=\frac{1+s}{2(1-h)}-\frac{(1+h)\kappa}{4hs}.
\]
Consequently,
\[
  L(1)=-\frac{s+1}{2}+\frac{(1+h)\kappa}{2hs},
  \qquad
  L(-1)=\frac{s+1}{2s}-\frac{(1-h)\kappa}{2hs^2}.
\]
Substitution into~\eqref{eq:jet-second} gives
\[
  \begin{aligned}
    2hs(1-h^2)\ell^2P'''(u)
    ={}&-(3h+1)(1-h^2)\kappa^2\\
    &+s\kappa\bigl[(3h^2+1)(1-s)+4h(s+1)\bigr],
  \end{aligned}
\]
which is~\eqref{eq:J}.
All divisions are valid because $0<h<1$, $s>0$, and
$w(1)=1$, $w(-1)=\rho>0$.

\bibliographystyle{elsarticle-num}
\bibliography{references}
\end{document}